\newtheorem{theorem}{Theorem}[section]
\theoremstyle{definition}
\newtheorem{remark}{Remark}[section]
\newtheorem{conjecture}{Conjecture}[section]
\newtheorem{question}{Question}[section]
\newtheorem{corollary}{Corollary}[section]
\DeclareMathOperator{\rank}{rank}
\DeclareMathOperator{\diag}{diag}
\DeclareMathOperator{\rk}{rank}
\DeclareMathOperator{\qrk}{qrank}
\DeclareMathOperator{\qdim}{qdim}
\DeclareMathOperator{\qrnk}{qrank}
\newcommand{\Nats}{\mathbbm{N}}
\newcommand{\Reals}{\mathbbm{R}}
\newcommand{\Ints}{\mathbbm{Z}}
\DeclareMathOperator{\Vect}{Vect_{\mathbbm{F}}}
\DeclareMathOperator{\Foams}{Foams}
\DeclareMathOperator{\Tait}{Tait}
\DeclareMathOperator{\F}{\mathbbm{F}}
\DeclareMathOperator{\adm}{adm}
\DeclareMathOperator{\facets}{facets}
\begin{document}

\title{Computer Bounds for Kronheimer-Mrowka Foam Evaluation}

\author{David Boozer} 

\date{\today}

\begin{abstract}
Kronheimer and Mrowka recently suggested a possible approach
towards a new proof of the four color theorem that does not rely on
computer calculations.
Their approach is based on a functor $J^\sharp$, which they define
using gauge theory, from the category of webs and foams to the
category of vector spaces over the field of two elements.
They also consider a possible combinatorial replacement
$J^\flat$ for $J^\sharp$.
Of particular interest is the relationship between the dimension of
$J^\flat(K)$ for a web $K$ and the number of Tait colorings $\Tait(K)$
of $K$; these two numbers are known to be identical for a special
class of ``reducible'' webs, but whether this is the case for
nonreducible webs is not known.
We describe a computer program that strongly constrains the
possibilities for the dimension and graded dimension of $J^\flat(K)$
for a given web $K$, in some cases determining these quantities
uniquely.
We present results for a number of nonreducible example webs.
For the dodecahedral web $W_1$ the number of Tait colorings is
$\Tait(W_1) = 60$, but our results suggest that
$\dim J^\flat(W_1) = 58$.
\end{abstract}

\maketitle

%\setcounter{tocdepth}{1} 
%\tableofcontents

\section{Introduction}

The four-color theorem states that the vertices of any planar graph
can be colored with no more than four colors in such a way that no
pair of adjacent vertices share the same color.
The theorem was first proven in 1976 by Appel and Haken via computer
calculations \cite{Appel}, and, though simplifications to their proof
have been made \cite{Robertson,Gonthier}, to this day no proof is
known that does not rely on computer assistance.

Recently Kronheimer and Mrowka suggested a new approach to the
four color theorem that may lead to the first non-computer-assisted
proof of this result \cite{Kronheimer-1}.
Their approach is based on a functor $J^\sharp$, which they define
using gauge theory, from the category of webs and foams to the
category of vector spaces over the field of two elements.
Kronheimer and Mrowka also consider a possible combinatorial
replacement $J^\flat$ for $J^\sharp$.
The functor $J^\flat$ was originally defined by Kronheimer and Mrowka
in terms of a list of combinatorial rules that they conjectured would
yield a well-defined result; this was later shown to be the case by
Khovanov and Robert \cite{Khovanov}.

In order to apply the functors $J^\sharp$ and $J^\flat$ to the
four-color problem, it is important to understand the relationships
between $\dim J^\sharp(K)$,  $\dim J^\flat(K)$, and the Tait number
$\Tait(K)$ for arbitrary webs $K$.
In addition, the vector spaces $J^\flat(K)$ carry a
$\Ints$-grading, and it is of interest to compute the quantum
dimensions $\qdim J^\flat(K)$ of these spaces.
We have written a computer program to calculate lower
bounds on $\dim J^\flat(K)$ and $\qdim J^\flat(K)$, which in some
cases are sufficiently strong to determine these quantities uniquely.
Our results are summarized in Table \ref{table:results}.
In particular, we get the following result:

\begin{theorem}
\label{theorem:results-exact}
For the webs $W_2$ and $W_3$ shown in Figure \ref{fig:example-webs} we
have that $\dim J^\flat(K) = \Tait(K)$.
\end{theorem}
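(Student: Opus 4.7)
The plan is a sandwich argument, pitting a theoretical upper bound of $\Tait(K)$ against a computer-certified lower bound of the same value, for each of $K = W_2$ and $K = W_3$.

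For the upper bound, $\dim J^\flat(K) \leq \Tait(K)$ is built into the Khovanov--Robert combinatorial model of $J^\flat$: closed foams are evaluated as sums indexed by Tait colorings of the underlying graph, so the bilinear foam pairing defining $J^\flat(K)$ has rank at most $\Tait(K)$ for every web $K$. This direction is purely structural and requires no computation, only a careful citation of the Khovanov--Robert evaluation.

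For the matching lower bound I would invoke the computer program described in the main text. Given $K$, the program produces a list of candidate vectors in $J^\flat(K)$ obtained from explicit foams bounding $K$, computes their Gram matrix over $\F_2$ via the Khovanov--Robert evaluation of the corresponding closed foams, and returns its rank as a certified lower bound on $\dim J^\flat(K)$. The substance of the theorem is the computational claim that for $K = W_2$ and $K = W_3$ this lower bound already reaches $\Tait(K)$; combined with the upper bound above, this forces $\dim J^\flat(K) = \Tait(K)$.

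The main obstacle is the lower-bound step: one must select a sufficiently rich family of foams to have any hope of saturating the upper bound, and then carry out the linear-independence computation within available memory and time. It is precisely this search that falls short for the dodecahedral web $W_1$, where the program only certifies $\dim J^\flat(W_1) \geq 58$, which is below $\Tait(W_1) = 60$; the content of the present theorem is that for the two simpler webs $W_2$ and $W_3$ the search succeeds in reaching the upper bound, with the bookkeeping and output inspection deferred to the table of results.
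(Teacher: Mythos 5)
Your proposal matches the paper's argument: the upper bound is Corollary \ref{cor:tait-flat} (deduced from Khovanov--Robert's result that $\langle K\rangle_\phi$ is free of rank $\Tait(K)$, Theorem \ref{theorem:free-tait}), and the lower bound is the computer-certified rank $\ell(K)$ of the Gram matrix of the generating half-foams, which Table \ref{table:results} shows equals $\Tait(K)$ for $W_2$ and $W_3$. The only slight imprecision is your gloss that the upper bound comes from the evaluation being ``a sum indexed by Tait colorings'' --- the sum is over admissible colorings of foam facets, and the $\Tait(K)$ bound is extracted via \cite[Proposition 4.18]{Khovanov} --- but this does not change the structure of the argument, which is the same sandwich the paper uses (the paper additionally upgrades it to a graded statement via Theorem \ref{theorem:lq-ineq}).
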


For the dodecahedral web $W_1$ the Tait number is $\Tait(W_1) = 60$,
but our results show that $\dim J^\flat(W_1)$ must be either 58 and
60, and suggest that it is in fact 58.
We should emphasize that even if $\dim J^\flat(W_1) = 58$, this would
not invalidate Kronheimer and Mrowka's strategy for proving the
four-color theorem using gauge theory; see Remark
\ref{remark:implications}.

The paper is organized as follows.
In Section \ref{sec:background}
we describe the functors $J^\sharp$ and $J^\flat$ and their
relationship to the four-color problem.
In Section \ref{sec:program} we describe the computer program.
In Section \ref{sec:results} we present the resulting lower bounds
on $\dim J^\flat(K)$ and $\qdim J^\flat(K)$ for a number of
nonreducible example webs $K$.
In Section \ref{sec:questions} we discuss some open questions.

\section{Background}
\label{sec:background}

Kronheimer and Mrowka's new approach to the four-color problem relies
on concepts involving webs and foams, which we briefly review here.
A \emph{web} is an unoriented planar trivalent graph.
A \emph{foam} is a kind of singular cobordism between two webs.
More precisely, a \emph{closed foam} $F$ is a singular 2D surface
embedded in $\Reals^3$ in which every point $p \in F$ has a
neighborhood that takes the form of one of
three local models shown in Figure \ref{fig:local-models}.
Points with the local model shown in
Figure \ref{fig:local-models}a,
Figure \ref{fig:local-models}b, and
Figure \ref{fig:local-models}c
are called \emph{regular points}, \emph{seam points}, and
\emph{tetrahedral points}, respectively.
The set of regular points forms a smooth 2D manifold whose connected
components are the \emph{facets} of $F$.
Each facet may be decorated with a finite number (possibly zero) of
marked points called \emph{dots}.
In general, we want to consider \emph{foams with boundary}
$F \subset \Reals^2 \times [a,b]$, which have local models
$K_- \times [a,a+\epsilon)$ and $K_+ \times (b-\epsilon,b]$ for webs
$K_-$ and $K_+$ near the bottom and top of the foam.
We define a \emph{half-foam} to be a foam with bottom boundary
$K_- = \varnothing$.
We can define a category $\Foams$ with webs as objects and
isotopy classes of foams with fixed boundary as morphisms.
We will thus sometimes refer to a foam $F$ with bottom boundary $K_-$
and top boundary $K_+$ as a \emph{cobordism} $F:K_- \rightarrow K_+$.

\begin{figure}
  \centering
  \includegraphics[scale=0.7]{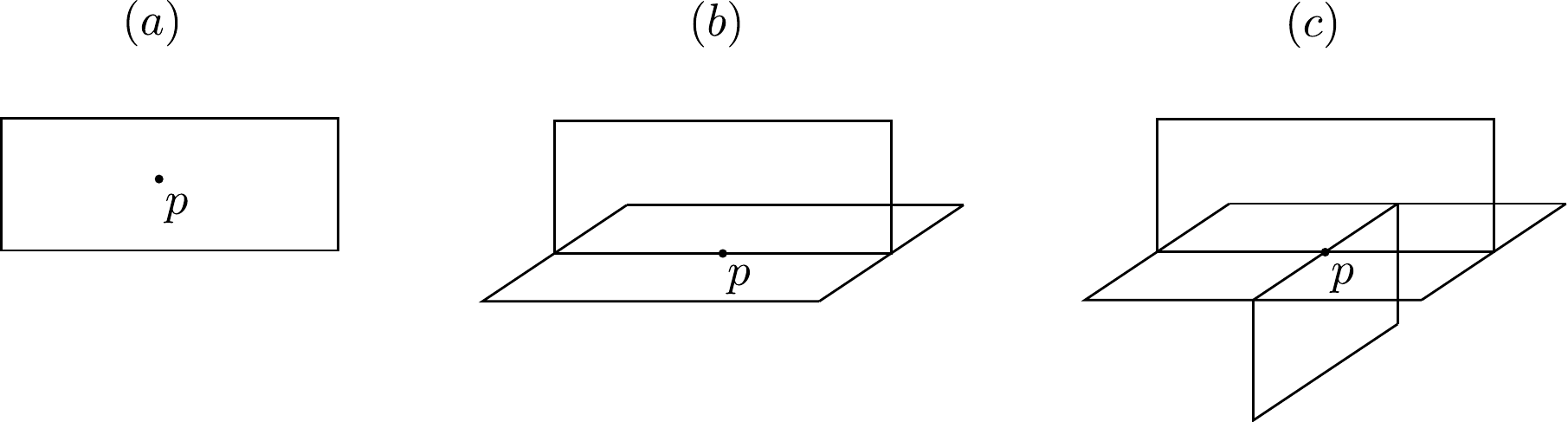}
  \caption{
    \label{fig:local-models}
    Local models for closed foams $F$.
    (a) Local model near a regular point $p \in F$.
    (b) Local model near a seam point $p \in F$.
    (c) Local model near a tetrahedral point $p \in F$.
  }
\end{figure}

Using a simple argument, the four-color theorem can be recast into the
language of webs.
We first define some additional terminology.
An edge $e$ of a web is said to be a \emph{bridge} if there is a
simple closed curve that intersects $e$ transversely in a single point
and is otherwise disjoint from the web.
A \emph{Tait coloring} of a web is a 3-coloring of the edges of the
web such that no two edges incident on a given vertex share the same
color.
Given a web $K$, we define the \emph{Tait number} $\Tait(K)$ to
be the number of distinct Tait colorings of $K$.
The four-color theorem is then equivalent to:

\begin{theorem}
\label{theorem:4-color-reformulated}
(Four-color theorem, reformulated)
For any bridgeless web $K$, we have $\Tait(K) > 0$.
\end{theorem}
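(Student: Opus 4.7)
The plan is to prove Theorem \ref{theorem:4-color-reformulated} via the classical Tait reformulation, identifying the four colors with the Klein four-group $V = (\Ints/2)^2$; its three nonzero elements $\alpha, \beta, \gamma$ satisfy $\alpha + \beta + \gamma = 0$ and will serve as the three Tait edge colors. First I would reduce the original four-color theorem to its dual face-coloring version: every bridgeless cubic planar graph admits a proper 4-coloring of its faces. This is standard: given an arbitrary simple planar graph $G$, one passes to a planar triangulation containing $G$ and takes its planar dual, which is a bridgeless cubic planar graph (i.e.\ a web), and 4-vertex-colorings of the triangulation correspond to 4-face-colorings of the dual web.

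For the forward direction (face 4-coloring implies Tait coloring), given a proper face coloring $c$ of a bridgeless web $K$ valued in $V$, I would define an edge coloring by $\bar c(e) = c(F^+) + c(F^-) \in V$, where $F^\pm$ are the two faces bordering $e$. Bridgelessness forces $F^+ \neq F^-$, so $\bar c(e)$ is one of $\alpha, \beta, \gamma$. At each trivalent vertex, the three incident edges border three pairwise distinct faces colored by some $x,y,z \in V$, and the resulting edge colors $x+y$, $y+z$, $z+x$ are pairwise distinct, nonzero, and sum to zero; hence they realize precisely $\{\alpha, \beta, \gamma\}$, giving a Tait coloring.

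For the converse (Tait coloring implies face 4-coloring), given a Tait coloring $\bar c$ with values in $\{\alpha, \beta, \gamma\}$, I would fix a basepoint face $F_0$ of $K$, set $c(F_0) = 0$, and for any other face $F$ define $c(F) \in V$ as the sum of the edge colors crossed along a path in the plane from a point of $F_0$ to a point of $F$. Well-definedness reduces, via the cycle space of the dual graph of $K$, to the vanishing of the sum of edge colors around every vertex, which holds since $\alpha + \beta + \gamma = 0$. Properness is automatic: for adjacent faces $F^\pm$ sharing an edge $e$, one has $c(F^+) - c(F^-) = \bar c(e) \neq 0$, and bridgelessness ensures that $e$ genuinely separates distinct faces (otherwise the relation would force $\bar c(e) = 0$).

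The main obstacle is the well-definedness step in the reverse direction: one must verify that the path-sum defining $c(F)$ is independent of the chosen path. This is handled by noting that the dual cycle space of a planar graph is generated by the small loops encircling each vertex of $K$, on which the required vanishing is exactly the relation $\alpha + \beta + \gamma = 0$. Planarity of $K$ is essential here, as is the bridgeless hypothesis, which enters both to ensure the face adjacency relation is well-behaved and to guarantee that each edge contributes a nonzero color.
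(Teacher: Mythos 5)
Your argument is correct: it is the classical Tait equivalence (four colors as the Klein group $V=(\Ints/2)^2$, edge colors as differences of face colors, with bridgelessness guaranteeing that each edge separates two distinct faces and hence that the three faces at each trivalent vertex are pairwise distinct). The paper itself offers no proof of this statement --- it introduces it only as the result of ``a simple argument'' recasting the four-color theorem into the language of webs --- so your write-up simply supplies the standard argument the paper implicitly invokes; note that for the stated implication (four-color theorem $\Rightarrow \Tait(K)>0$) only your forward direction is needed, while the converse is what makes the statement a genuine reformulation.
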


This reformulation allows Kronheimer and Mrowka to introduce ideas
from gauge theory; in essence, they define a version of singular
instanton homology in which the gauge fields are required to have
prescribed singularities along a given web $K$.
In this manner they define a functor
$J^\sharp:\Foams \rightarrow \Vect$ from the category of foams to the
category of vector spaces over $\F$, the field of two
elements.
In particular, the functor associates a natural number
$\dim J^\sharp(K)$ to each web $K$.

\begin{remark}
In fact, the functor $J^\sharp$ can be defined for a more general
source category in which the webs are embedded in $\Reals^3$ and the
foams are embedded in $\Reals^4$.
We will not consider these more general notions of webs and foams
here.
\end{remark}

Kronheimer and Mrowka prove the following theorems:

\begin{theorem}
\label{theorem:nonvanishing}
(Kronheimer and Mrowka \cite[Theorem 1.1]{Kronheimer-1})
Given a web  $K$, we have $\dim J^\sharp(K) = 0$ if and only
if $K$ has a bridge.
\end{theorem}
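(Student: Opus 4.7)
The plan is to prove the two implications separately, since the easy direction has a combinatorial/functorial flavor while the hard direction requires genuine input from gauge theory.

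For the ``only if'' direction, suppose $K$ contains a bridge $e$. I would exploit the TQFT-like structure of $J^\sharp$: a bridge means there is an embedded arc $\gamma$ in the plane cutting the page into two half-planes, each meeting $K$ only in the single edge $e$. I would try to exhibit a foam relation showing that the identity cobordism on $K$ factors through a configuration that $J^\sharp$ forces to vanish. Concretely, one can pinch the bridge to produce a ``theta-like'' singular cobordism; applied with $J^\sharp$, this should reduce the problem to showing that $J^\sharp$ of a single edge, viewed as a (trivial) web with the appropriate closing off, is zero. Alternatively, one can use an Euler-characteristic or dot-counting constraint: the bridge forces an inconsistency in the mod-2 grading (or the instanton degree shift) on any would-be generator, so the underlying chain complex must vanish.

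For the ``if'' direction, assume $K$ is bridgeless, and the goal is $\dim J^\sharp(K) > 0$. My plan would be to follow the standard non-vanishing template from instanton Floer homology. First, I would invoke Floer's excision theorem, adapted to the singular/orbifold setting of $J^\sharp$, to reduce $K$ to a connected sum along an appropriate three-sphere; bridgelessness is what guarantees the relevant cobordism has the right topology for the excision isomorphism to be useful. Next, I would establish a base case: show directly that $J^\sharp$ is nonzero for a small family of generating bridgeless webs, e.g.\ the theta graph or the tetrahedral web, typically by computing the singular instanton homology of an explicit orbifold and checking that a generator survives to $E_\infty$ in the relevant spectral sequence. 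Finally, I would argue inductively that any bridgeless web can be built from the base cases via local moves, each of which is realized by a cobordism whose induced map on $J^\sharp$ is known to be nonzero (or is injective on the non-vanishing part), so non-vanishing propagates.

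The hard part is, of course, the non-vanishing base case and the excision step. Extending Floer excision to the singular-bundle setting requires control over the orbifold metrics, the bubbling analysis at the singular locus, and the compactness of moduli of singular ASD connections; these are the analytic heart of \cite{Kronheimer-1} and there is no shortcut around them. A secondary obstacle is the inductive step: in order for local-move cobordisms to act nontrivially, one needs a surgery exact triangle relating $J^\sharp$ of webs differing by an unknotted arc move, and one must check that the connecting map has the right degree. In a clean write-up I would prove the bridge direction first (it is essentially formal once the foam relations are in hand), state excision and the base-case computation as black boxes, and then present the bridgeless direction as a short induction building on those two inputs.
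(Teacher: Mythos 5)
This theorem is not proved in the present paper at all: it is quoted verbatim from \cite[Theorem 1.1]{Kronheimer-1}, and the gauge-theoretic argument lives entirely in that reference. So there is no in-paper proof to compare against, and your proposal has to be judged on its own terms. As it stands it is a research outline rather than a proof: every substantive ingredient of your ``if'' direction (excision in the singular-bundle setting, the base-case computation, the surgery exact triangle, the bubbling and compactness analysis) is explicitly declared a black box, so nothing is actually established.

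Two gaps are worth naming concretely. First, your ``only if'' direction is not right as sketched. The standard mechanism is representation-theoretic, not a ``dot-counting'' or ``mod-2 grading inconsistency'': a bridge $e$ gives an embedded $2$-sphere meeting $K$ transversely in the single point $e \cap S$, so the meridian of $e$ bounds a disk in the complement of $K$ and is null-homotopic there; since the critical points of the relevant Chern--Simons functional correspond to $SO(3)$ representations sending every meridian to an element of order exactly two, the critical set is empty and $J^\sharp(K)=0$. Your alternative of factoring the identity foam through a vanishing configuration is not exhibited, and the grading claim has no known basis. Second, and more seriously, the inductive step of your ``if'' direction would fail: the local-move cobordisms sit in long exact sequences (the exact triangles of \cite{Kronheimer-2}), and the induced maps on $J^\sharp$ can and do have kernels, so non-vanishing does not propagate along local moves for free. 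Deciding whether a given class survives a connecting map requires controlling the adjacent terms of the triangle, which reintroduces the full difficulty of the problem; indeed, an unconditional ``non-vanishing propagates under local moves'' principle of the kind you assume is essentially as strong as the theorem itself. The actual argument of \cite{Kronheimer-1} instead shows directly that the representation variety is non-empty for bridgeless $K$ --- the combinatorial input being the existence of a perfect matching in a bridgeless trivalent graph --- together with a genuinely Floer-theoretic non-vanishing argument, neither of which appears in your outline.
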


\begin{theorem}
\label{theorem:tait-sharp}
(Kronheimer and Mrowka \cite{Kronheimer-3})
Given a web  $K$, we have $\dim J^\sharp(K) \geq \Tait(K)$.
\end{theorem}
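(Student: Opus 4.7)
The plan is to produce, for each Tait coloring $\tau$ of $K$, an element $v_\tau \in J^\sharp(K)$, and then to argue that the family $\{v_\tau\}_\tau$ is linearly independent. First I would build, from the data of $\tau$, a half-foam $F_\tau : \varnothing \to K$ as follows. Fix two of the three colors, say red and blue; by the Tait condition the edges of $K$ carrying those two colors form a disjoint union $C_\tau$ of simple closed curves in the plane, while the remaining (green) edges sit as arcs joining them. I would assemble $F_\tau \subset \Reals^2 \times [0,1]$ by taking disjoint cup disks filling in the curves of $C_\tau$ and attaching rectangular facets along each green arc, so that the top boundary of the resulting half-foam is exactly $K$. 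Define $v_\tau := J^\sharp(F_\tau)(1) \in J^\sharp(K)$, where $1 \in \F = J^\sharp(\varnothing)$.

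Linear independence would be detected by the pairing $J^\sharp(K) \otimes J^\sharp(K) \to \F$ coming from closed-foam evaluation. Let $G_\tau : K \to \varnothing$ denote the upside-down mirror of $F_\tau$, and form the matrix
\[
M_{\tau,\tau'} := J^\sharp(G_\tau \circ F_{\tau'})(1) \in \F,
\]
indexed by pairs of Tait colorings of $K$. By functoriality of $J^\sharp$ the rank of $M$ is a lower bound for $\dim J^\sharp(K)$, so it is enough to show that $M$ is nonsingular over $\F$.

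The main obstacle is this last step, which requires one to actually evaluate $J^\sharp$ on the closed foams $G_\tau \circ F_{\tau'}$. I would attack it by reducing those evaluations to known building blocks: closed surfaces, theta-foams, and tetrahedral pieces whose images under $J^\sharp$ are computed by Kronheimer and Mrowka in \cite{Kronheimer-1}, together with the surgery exact triangles available in singular instanton homology. The natural target is, after suitably ordering the set of Tait colorings (for instance by some lexicographic rule reading the coloring along a fixed spanning tree of $K$), to show that $M$ is upper triangular with nonzero diagonal entries. The off-diagonal vanishing would come from a neck-stretching or bubbling argument showing that mismatched colorings force the singular instanton moduli spaces to be empty, while the diagonal entries $J^\sharp(G_\tau \circ F_\tau)(1)$ are identified with the Khovanov-Robert combinatorial foam evaluations of the same closed foams, which are known to be nonzero in $\F$. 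If the full gauge-theoretic evaluation resists direct computation, a fallback is to establish only triangularity modulo a filtration coming from the instanton number and verify nonvanishing on the associated graded, which is still sufficient to conclude that $M$ has rank at least $\Tait(K)$.
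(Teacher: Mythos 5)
This theorem is not proved in the paper at all: it is quoted from Kronheimer and Mrowka's \emph{A deformation of instanton homology for webs} \cite{Kronheimer-3}, where the argument proceeds by deforming the instanton chain complex over a larger coefficient ring so that the deformed homology becomes free of rank exactly $\Tait(K)$, with generators indexed by Tait colorings; the bound $\dim J^\sharp(K) \geq \Tait(K)$ then follows by a universal-coefficients type argument. Your strategy is entirely different, and it has a fatal gap at precisely the step you flag as "the main obstacle": there is no known way to evaluate $J^\sharp$ on closed foams. Your fallback of identifying the diagonal entries $J^\sharp(G_\tau \circ F_\tau)(1)$ with the Khovanov--Robert combinatorial evaluations is exactly Conjecture \ref{conjecture:flat-sharp}, which is open; you cannot assume it in a proof of Theorem \ref{theorem:tait-sharp}. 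The off-diagonal vanishing via "neck-stretching or bubbling" and the proposed triangular structure are not supported by any argument, and there is no reason to expect triangularity with respect to a lexicographic order on colorings.

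Worse, there is concrete evidence in this very paper that your matrix $M$ is \emph{singular} in general, at least when evaluated combinatorially. Your half-foams $F_\tau$ built from Tait colorings are essentially the foams $F(W_1,c_4)$ associated to $4$-colorings of the faces (Tait colorings of a planar web correspond to face $4$-colorings), and the remark following Table \ref{table:results} records that the pairing form restricted to the span of these foams (even after adding dots) has rank $58$ for the dodecahedral web $W_1$, whereas $\Tait(W_1) = 60$. So the nonsingularity of $M$ --- the entire content of your proof --- fails for $W_1$ under $J^\flat$-evaluation, and could only hold under $J^\sharp$-evaluation if $J^\sharp$ and $J^\flat$ disagree on these closed foams, which would contradict the identification you propose to use for the diagonal entries. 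The lesson is that $J^\sharp(K)$ is not known to be detected by pairings of half-foams at all; the actual proof must (and does) work directly with the gauge theory rather than with foam evaluations.
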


Based on example calculations, as well as some general properties of
the functor $J^\sharp$, Kronheimer and Mrowka conjecture that

\begin{conjecture}
\label{conjecture:tait-sharp}
For any web $K$, we have $\dim J^\sharp(K) = \Tait(K)$.
\end{conjecture}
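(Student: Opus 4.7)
The plan is to establish the matching upper bound $\dim J^\sharp(K) \leq \Tait(K)$, since Theorem \ref{theorem:tait-sharp} already provides the lower bound. I would proceed by induction on a suitable complexity of $K$ (for example, the number of vertices, or a measure that tracks how far $K$ is from being reducible), using gauge-theoretic skein or surgery exact sequences to relate $\dim J^\sharp(K)$ to the dimensions associated with simpler webs. The base of the induction would be the class of reducible webs, for which Kronheimer and Mrowka have already verified $\dim J^\sharp(K) = \Tait(K)$.

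The core step would be, at each stage of the induction, to identify a local configuration in $K$ at which a surgery exact sequence of the form
\[
J^\sharp(K') \to J^\sharp(K) \to J^\sharp(K'')
\]
applies, with $K'$ and $K''$ webs of strictly smaller complexity. For such a local configuration one wants the corresponding identity $\Tait(K) = \Tait(K') + \Tait(K'')$ on the combinatorial side, which does hold for standard local moves on trivalent graphs such as bigon resolution or Y--$\Delta$ substitution. Combined with the inductive hypothesis this would yield $\dim J^\sharp(K) \leq \dim J^\sharp(K') + \dim J^\sharp(K'') = \Tait(K') + \Tait(K'') = \Tait(K)$, closing the induction. A secondary task is to certify that some admissible local move is available at every nonreducible web, so that the inductive scheme actually terminates; this is a purely combinatorial statement about trivalent planar graphs.

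The hard part will be the gauge theory, and it is why the conjecture remains open. No exact triangle of the required locality is currently known for $J^\sharp$ on trivalent webs: the excision-type relations in singular instanton Floer homology are adapted to knots and links rather than to foams carrying seams and tetrahedral points, and assembling the cobordisms into a genuine exact triangle requires a delicate analysis of the instanton moduli spaces near the singular strata. A compounding subtlety, highlighted by the present paper's computer evidence that $\dim J^\flat(W_1)$ may be strictly smaller than $\Tait(W_1)$, is that one cannot simply port over the combinatorial relations defining $J^\flat$: any candidate surgery sequence for $J^\sharp$ must be genuinely weaker than the $J^\flat$ relations, and the gauge-theoretic argument must isolate exactly which local moves preserve the equality $\dim J^\sharp = \Tait$. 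Until such a refined surgery theory for $J^\sharp$ is developed, the induction I have sketched cannot be carried out, and the conjecture, together with its implication of the four-color theorem, will remain out of reach.
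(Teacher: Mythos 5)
The statement you were asked to prove is Conjecture \ref{conjecture:tait-sharp}, an open conjecture of Kronheimer and Mrowka; the paper contains no proof of it, and indeed a proof would (via Theorem \ref{theorem:nonvanishing}) yield a new proof of the four-color theorem. Your submission correctly recognizes this: it is a strategy sketch rather than a proof, and you explicitly concede at the end that the induction cannot currently be carried out. That self-assessment is accurate, and the overall shape of your plan --- combine the known lower bound $\dim J^\sharp(K) \geq \Tait(K)$ from Theorem \ref{theorem:tait-sharp} with an inductive upper bound driven by local exact sequences, taking reducible webs as the base case --- is the approach Kronheimer and Mrowka themselves pursue.

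Two factual corrections to your account of where the difficulty lies. First, exact triangles for $J^\sharp$ associated to local modifications of webs are not absent from the literature: they are the subject of \cite{Kronheimer-2}, and they are precisely how the bound $\dim J^\sharp(W_1) \leq 68$ for the dodecahedral web is obtained. The obstruction is not the nonexistence of surgery sequences but their insufficiency: for a nonreducible web there is no small face to eliminate, the available triangles do not come paired with a matching additive identity for $\Tait$, and the resulting upper bounds (e.g.\ $68$ versus the conjectured $60$) are not sharp. Second, among the local replacements of Figure \ref{fig:local-replacements-red}, the additive identity $\Tait(K) = \Tait(K_{4a}') + \Tait(K_{4b}')$ holds for square elimination, whereas bigon elimination gives the multiplicative relation $\Tait(K) = 2\Tait(K_2')$ (Remark \ref{remark:tait-local-replacements}); your phrasing conflates these. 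Your observation that a possible failure of $\dim J^\flat(W_1) = \Tait(W_1)$ would not refute the conjecture for $J^\sharp$ is correct and matches Remark \ref{remark:implications}. In short, there is no gap to report beyond the one you already report yourself: the statement remains a conjecture, and your proposal is not a proof of it.
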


Kronheimer and Mrowka show that Conjecture
\ref{conjecture:tait-sharp} is true for a special class of
\emph{reducible} webs $K$ (these are called \emph{simple} in
\cite{Kronheimer-1}), which we define in Section
\ref{ssec:construct-gens}.

Together with Theorem \ref{theorem:nonvanishing}, Conjecture
\ref{conjecture:tait-sharp} implies Theorem
\ref{theorem:4-color-reformulated}, the reformulated four-color
theorem.
It is thus of great interest to determine whether Conjecture
\ref{conjecture:tait-sharp} is in fact true.
As a possible route towards that goal, Kronheimer and Mrowka suggested
that the gauge-theoretic functor $J^\sharp:\Foams \rightarrow \Vect$
might be related to a simpler functor
$J^\flat:\Foams \rightarrow \Vect$ that could be defined in a purely
combinatorial fashion.
Kronheimer and Mrowka descried a list of combinatorial evaluation
rules that they conjectured would assign a well-defined field element
$J^\flat(F) \in \F$ to every closed foam $F$.
This conjecture was later shown to be correct by Khovanov and Robert
\cite{Khovanov}, who  adapted ideas by Robert and Wagner \cite{Robert}
to describe an explicit formula for $J^\flat(F)$.
Kronheimer and Mrowka conjecture that

\begin{conjecture}
\label{conjecture:flat-sharp}
(Kronheimer and Mrowka \cite[Conjecture 8.10]{Kronheimer-1})
For any closed foam $F$, we have $J^\flat(F) = J^\sharp(F)$.
\end{conjecture}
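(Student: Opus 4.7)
The strategy I would pursue is to show that the gauge-theoretic invariant $J^\sharp$ satisfies, on closed foams, the same system of local evaluation rules that Khovanov and Robert (following Robert and Wagner) proved characterize $J^\flat$ uniquely. Once this is established, equality on every closed foam follows automatically, since any functor $\Foams \to \Vect$ that obeys those rules on closed foams must coincide with $J^\flat$ there by the explicit formula of Khovanov and Robert.

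First I would compile the list of defining local relations for $J^\flat$ on closed foams: the sphere and dotted-sphere evaluations, the theta-foam evaluation, neck-cutting along a circle contained in a single facet, bubble-bursting at a seam, dot migration across a seam, the tetrahedron relation, and any further identities needed to reduce an arbitrary closed foam to an element of $\F$. The proof of Khovanov and Robert that $J^\flat$ is well defined amounts precisely to showing that this system of rules is locally confluent and globally complete, so any closed-foam invariant satisfying the rules automatically agrees with $J^\flat$.

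Next I would verify each relation for $J^\sharp$. For the rules whose local models involve only smoothly embedded surfaces, such as the sphere evaluation and neck-cutting away from seams, the verification is essentially standard instanton-Floer TQFT bookkeeping adapted to the singular setting of \cite{Kronheimer-1}, and several of these identities are already recorded there. The essential new content lies in the rules whose local models include seams and tetrahedral points; these require analyzing singular instanton moduli on the corresponding model $4$-manifolds with prescribed singular behavior along the foam, identifying the relevant relative invariants, and checking that the resulting mod-$2$ counts realize the same polynomial combinations of dots that appear in the Khovanov and Robert formula.

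The main obstacle is exactly this last step. The moduli spaces of singular instantons with bifold singularities along a surface that itself has seams and tetrahedral points do not admit an off-the-shelf transversality or neck-stretching toolkit; one must argue that the neck-stretching computations which are supposed to establish, for example, the theta-foam value and the tetrahedron relation actually produce well-defined counts with the predicted values in $\F$, rather than merely agreement up to an unknown correction. A plausible way to proceed is to combine Kronheimer and Mrowka's excision principle with an induction on the number of seam circles and tetrahedral points of the foam, so that each reduction step is controlled by a local model that can in principle be evaluated directly. Even granting this setup, the tetrahedral-point relation is likely to require a dedicated gauge-theoretic computation that is not obviously reducible to the simpler seam identities, and that is where I expect the proof to be hardest.
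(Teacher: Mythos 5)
The statement you have been asked to prove is not a theorem of this paper at all: it is Kronheimer and Mrowka's Conjecture 8.10, reproduced here as Conjecture \ref{conjecture:flat-sharp}, and it remains open. The paper offers no proof of it and takes no position on its truth; its purpose is to gather computational evidence bearing on the related Question \ref{question:tait-flat} about whether $\dim J^\flat(K) = \Tait(K)$. So there is no proof in the paper to compare your attempt against, and any submission presenting this statement as proved would be incorrect on its face.

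Judged on its own terms, your reduction is sensible as a strategy: if $J^\sharp$ could be shown to satisfy the full system of local evaluation rules that Kronheimer and Mrowka postulated and that Khovanov and Robert proved to be consistent and sufficient to evaluate every closed foam, then $J^\sharp(F) = J^\flat(F)$ would follow for all closed $F$. But this reduction is essentially how Kronheimer and Mrowka already frame the problem, and the step you yourself flag as ``the main obstacle'' --- establishing the seam and tetrahedral-point relations for singular instanton counts via neck-stretching on the relevant local models --- is precisely the unresolved content of the conjecture. You do not carry out those computations; you only argue that they are plausible and identify where they would be hardest. A secondary caution: your argument also tacitly assumes that the Kronheimer--Mrowka rules \emph{uniquely characterize} any closed-foam invariant satisfying them, which requires knowing that every closed foam can be reduced to a scalar by the rules alone; that completeness claim needs to be cited or proved, not just asserted as a consequence of Khovanov and Robert's consistency result. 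In short, what you have written is a research program whose open steps coincide with the open problem itself, not a proof.
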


Once $J^\flat(F) \in \F$ has been defined for closed foams $F$, one
can use the \emph{universal construction} \cite{Blanchet} to extend
$J^\flat$ to a functor $J^\flat:\Foams \rightarrow \Vect$.
This is done in \cite{Kronheimer-1} as follows.
For the empty web $\varnothing$, define
$J^\flat(\varnothing) = \F$.
For a nonempty web $K$, let $V(K)$ be the $\F$-vector space spanned by
all half-foams with top boundary $K$.
Define a bilinear form $(-, -):V(K) \otimes V(K) \rightarrow \F$ such
that $(F_1,F_2) = J^\flat(F_1 \cup_K \bar{F}_2)$, where
$F_1 \cup_K \bar{F}_2$ is the closed foam obtained by reflecting
$F_2$ top-to-bottom to get $\bar{F}_2$ and then gluing it to $F_1$
along $K$.
Now define $J^\flat(K)$ to be the quotient of $V(K)$ by the
orthogonal complement of $V(K)$ relative to $(-,-)$.
Given a cobordism $F_{21}:K_1 \rightarrow K_2$ from web $K_1$ to web
$K_2$, define $J^\flat(F_{21}):J^\flat(K_1) \rightarrow J^\flat(K_2)$
such that
$J^\flat(F_{21})([F_1]) = [F_{21} \cup_K F_1] \in J^\flat(K_2)$ for
$[F_1] \in J^\flat(K_1)$;
note that we have defined $J^\flat$ on webs in precisely such a way
that $J^\flat$ is well-defined on cobordisms.
A closed foam $F$ can be viewed as a cobordism
$F:\varnothing \rightarrow \varnothing$, so $J^\flat(F)$ is a linear
map from $\F$ to $\F$.
Identifying this linear map with an element of $\F$, we recover
the original closed foam evaluation $J^\flat(F) \in \F$.

\begin{remark}
By construction, the bilinear form $(-,-)$ on $V(K)$ induces a
nondegenerate bilinear form on $J^\flat(K)$ for any web $K$.
The functor $J^\sharp$ is monoidal, but it is not known if $J^\flat$
is monoidal.
\end{remark}

If Conjecture \ref{conjecture:flat-sharp} is true, then
the vector space $J^\flat(K)$ is a subquotient of
the vector space $J^\sharp(K)$, as can be seen as follows.
Consider the subspace $\tilde{V}(K)$ of $J^\sharp(K)$ spanned by all vectors
of the form $J^\sharp(F)$, where $F \in V(K)$ is a half-foam with top
boundary $K$.
The bilinear form $(-,-):V(K) \otimes V(K) \rightarrow \F$ restricts
to a bilinear form
$(-,-)_{\tilde{V}(K)}:\tilde{V}(K) \otimes \tilde{V}(K) \rightarrow \F$.
Then $J^\sharp(K)$ is the quotient of $\tilde{V}(K)$ by the orthogonal
complement of $\tilde{V}(K)$ relative to $(-,-)_{\tilde{V}(K)}$.

An easy corollary to a result of Khovanov and Robert
\cite[Proposition 4.18]{Khovanov} is the following:

\begin{corollary}
\label{cor:tait-flat}
For any web $K$, we have that $\dim J^\flat(K) \leq \Tait(K)$.
\end{corollary}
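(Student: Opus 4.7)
The plan is to use the cited Khovanov--Robert Proposition 4.18 to factor the defining bilinear form $(-,-)$ on $V(K)$ through an $\F$-vector space of dimension $\Tait(K)$, and then observe that the rank of $(-,-)$ is precisely $\dim J^\flat(K)$.

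First I would fix notation. For each Tait coloring $c$ of $K$, define a linear functional $\ell_c : V(K) \to \F$ by declaring $\ell_c(F)$, for a half-foam $F$ with top boundary $K$, to be the sum of the Khovanov--Robert coloring contributions over those admissible colorings of $F$ that restrict to $c$ on the boundary web $K$. Assemble these functionals into a single linear map
\begin{equation*}
\Phi : V(K) \longrightarrow \F^{\Tait(K)}, \qquad F \longmapsto (\ell_c(F))_{c \in \Tait(K)}.
\end{equation*}

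Next I would invoke Proposition 4.18 of Khovanov--Robert, which expresses $J^\flat$ of a closed foam as a sum of contributions indexed by admissible colorings of its facets. Applied to the closed foam $F_1 \cup_K \bar{F}_2$, the admissible colorings partition according to their restriction to $K$, which is precisely a Tait coloring of $K$. This yields the key identity
\begin{equation*}
(F_1,F_2) \;=\; J^\flat(F_1 \cup_K \bar{F}_2) \;=\; \sum_{c \in \Tait(K)} \ell_c(F_1)\, \ell_c(F_2).
\end{equation*}
In other words, $(-,-)$ is the pullback under $\Phi$ of the standard symmetric bilinear form on $\F^{\Tait(K)}$. The heart of the verification, and the only nontrivial step, is checking that the Khovanov--Robert formula for the closed foam splits into a product $\ell_c(F_1)\,\ell_c(F_2)$ along each boundary coloring $c$; this is exactly the content of the cited proposition, so nothing more need be done here.

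Finally I would conclude by pure linear algebra. Since $J^\flat(K)$ is by construction the quotient of $V(K)$ by the radical of $(-,-)$, one has $\dim J^\flat(K) = \rank (-,-)$. The factorization in the previous step immediately gives $\rank(-,-) \le \rank \Phi \le \dim \F^{\Tait(K)} = \Tait(K)$, which is the desired inequality. The main obstacle is confirming that the Khovanov--Robert evaluation does decompose along boundary colorings in the product form above; once that factorization is extracted from their Proposition 4.18, the rest is immediate.
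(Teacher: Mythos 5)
Your linear-algebra frame is fine: if the pairing $(-,-)$ on $V(K)$ really factored through an $\F$-linear map $\Phi:V(K)\to\F^{\Tait(K)}$, then $\dim J^\flat(K)=\rank(-,-)\le\Tait(K)$ would follow at once. The gap is the factorization itself. The identity $(F_1,F_2)=\sum_{c}\ell_c(F_1)\,\ell_c(F_2)$ with $\ell_c$ valued in $\F$ is not the content of Khovanov--Robert's Proposition 4.18, and your sketch does not establish it. That proposition (quoted in the paper as Theorem \ref{theorem:free-tait}) says that the $\F[E]$-module $\langle K\rangle_\phi$ obtained from the universal construction over the deformed evaluation is \emph{free of rank} $\Tait(K)$; it is not a statement that the closed-foam evaluation splits as an $\F$-valued sum of products along boundary colorings. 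Moreover your $\ell_c$ is not well defined: the per-coloring contribution $P(F,c)/Q(F,c)$ makes sense only for closed foams. For a half-foam the bicolored surfaces $F_{ij}(c)$ have boundary, their Euler characteristics need not be even, so the natural ``half'' of $Q(F,c)$ would involve square roots of the $X_i+X_j$ and would live, at best, in an extension of the localized ring $R''$ --- and that localization does not survive the specialization $E_1=E_2=E_3=0$, since $(X_1+X_2)(X_1+X_3)(X_2+X_3)=E_1E_2+E_3$ maps to $0$ there. Already for the circle web the coloring-$i$ term for a sphere with $a+b$ dots is $X_i^{a+b}/\prod_{j\ne i}(X_i+X_j)$, which does not split as a product of two elements of $R''$ attached to the two disks; so the ``nothing more need be done here'' step is exactly where the argument fails.

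The paper's actual route sidesteps this entirely: $(F_1,F_2)\in\F$ is the evaluation of $(F_1,F_2)_\phi\in\F[E]$ at $E=0$, so $(F_1,F_2)_\phi=0$ forces $(F_1,F_2)=0$; hence the radical of $(-,-)_\phi$ maps into the radical of $(-,-)$, and $J^\flat(K)$ is a quotient of $\langle K\rangle_\phi\otimes_{\F[E]}\F[E]/(E)$, whose dimension is $\Tait(K)$ by the freeness statement of Theorem \ref{theorem:free-tait}. If you want to rescue your approach, you would have to prove the product decomposition over a ring where it genuinely holds and then control what survives specialization --- which is, in effect, what the deformation $\langle-\rangle_\phi$ is doing for you.
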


Khovanov and Robert ask the following question, which they answer in
the affirmative in the case that $K$ is reducible:

\begin{question}
\label{question:tait-flat}
(Khovanov and Robert \cite{Khovanov})
Is $\dim J^\flat(K) = \Tait(K)$ for every web $K$?
\end{question}

In summary, we can assign three natural numbers to any web $K$:
$\Tait(K)$, $\dim J^\sharp(K)$, and $\dim J^\flat(K)$.
Theorem \ref{theorem:tait-sharp} and
Corollary \ref{cor:tait-flat} imply that for any web $K$ these three
numbers are related by
\begin{align}
  \dim J^\flat(K) \leq \Tait(K) \leq \dim J^\sharp(K),
\end{align}
and for reducible webs $K$ these three numbers coincide:
\begin{align}
  \dim J^\flat(K) = \Tait(K) = \dim J^\sharp(K).
\end{align}
Conjecture \ref{conjecture:tait-sharp} states that
$\dim J^\sharp(K) = \Tait(K)$ for all webs $K$, and
Question \ref{question:tait-flat} asks whether
$\dim J^\flat(K) = \Tait(K)$ for all webs $K$.
Due to Theorem \ref{theorem:nonvanishing}, a proof that
$\dim J^\sharp(K) = \Tait(K)$ for all webs $K$ would yield a proof
of the four-color theorem.

In light of these interrelated conjectures, it is of interest to
compute examples of
$\dim J^\flat(K)$ and $\dim J^\sharp(K)$ for nonreducible webs $K$.
The only such results that have yet been obtained are for the
dodecahedral web $W_1$, the smallest nonreducible web, which has Tait
number $\Tait(W_1) = 60$.
In particular, Kronheimer and Mrowka show:

\begin{theorem}
(Kronheimer and Mrowka \cite{Kronheimer-1})
For the dodecahedral web $W_1$, we have $\dim J^\flat(W_1) \geq 58$.
\end{theorem}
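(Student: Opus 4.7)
The plan is to exhibit a subspace of $J^\flat(W_1)$ of dimension at least $58$. By the universal construction, $\dim J^\flat(W_1)$ equals the rank of the bilinear pairing $(F_1, F_2) \mapsto J^\flat(F_1 \cup_{W_1} \bar F_2)$ on $V(W_1)$, so it suffices to produce half-foams $F_1, \ldots, F_N$ with top boundary $W_1$ whose Gram matrix
\[
  M_{ij} := J^\flat(F_i \cup_{W_1} \bar F_j) \in \F
\]
has rank at least $58$ over $\F$.

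First I would assemble a candidate family of half-foams. A natural starting point is the family indexed by the $\Tait(W_1) = 60$ Tait colorings of $W_1$: following Khovanov and Robert \cite{Khovanov}, for each Tait coloring $c$ one builds an explicit half-foam $F_c$, and the classes $[F_c] \in J^\flat(W_1)$ form a spanning set (this underlies Corollary \ref{cor:tait-flat}). Consequently the $60 \times 60$ Gram matrix $M$ with $M_{cc'} = J^\flat(F_c \cup_{W_1} \bar F_{c'})$ has $\rk M = \dim J^\flat(W_1)$, and the task reduces to showing $\rk M \geq 58$. I would compute each entry using the explicit Khovanov--Robert formula for $J^\flat$ on closed foams, which expresses $J^\flat(F_c \cup_{W_1} \bar F_{c'})$ as a sum, over admissible $3$-colorings of the facets of the closure extending the boundary data, of products of local weights at facets, seams, and tetrahedral points. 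Finally I would compute $\rk M$ over $\F$ by Gaussian elimination.

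The main obstacle is a combination of combinatorial complexity and the delicate behaviour of the pairing over $\F$. Each entry $M_{cc'}$ is a sum over a large set of $3$-colorings of a closed foam inheriting many facets from $W_1$, and there are $60 \times 60$ such entries to evaluate, so the calculation is only feasible by computer; moreover, because $J^\flat$ takes values in the field of two elements, cancellations can depress the rank well below $60$, and it is precisely this possibility that makes pinning down $\dim J^\flat(W_1)$ subtle. If the $60$-vector Tait-coloring family does not on its own produce a Gram matrix of rank at least $58$, one can enlarge it by adjoining vectors of the form $J^\flat(G)[x]$, where $G : K' \to W_1$ is a cobordism from a reducible web $K'$ (on which $J^\flat$ is fully understood) and $x$ runs over a basis of $J^\flat(K')$, and then recompute the rank; refinements of this bound-improving strategy are essentially what the present paper systematizes.
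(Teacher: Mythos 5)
Your overall mechanism --- choose a finite family of half-foams with top boundary $W_1$, evaluate the resulting closed foams with the Khovanov--Robert formula, and bound $\dim J^\flat(W_1)$ from below by the rank over $\F$ of the Gram matrix (Remark \ref{remark:rank-bound}) --- is exactly the mechanism used by Kronheimer--Mrowka and in this paper. The gap is in the specific family you propose to start from. For a nonreducible web such as $W_1$ there is no known construction of half-foams $F_c$ indexed by the Tait colorings whose classes span $J^\flat(W_1)$, and Corollary \ref{cor:tait-flat} does not rest on such a spanning set: it follows from Theorem \ref{theorem:free-tait}, whose proof bounds the rank of the pairing by factoring the closed-foam evaluation through admissible colorings (each admissible coloring of $F_1 \cup_K \bar{F}_2$ restricts to a Tait coloring of $K$), not by exhibiting $\Tait(K)$ spanning half-foams. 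Indeed, if an explicit computable spanning family of $60$ half-foams existed, the $60 \times 60$ Gram computation you describe would determine $\dim J^\flat(W_1)$ exactly, whereas whether it equals $58$ or $60$ is precisely what remains open (Question \ref{question:tait-flat} and Table \ref{table:results}). So your reduction to ``$\rk M = \dim J^\flat(W_1)$'' is unjustified; without the spanning property you only get $\rk M \leq \dim J^\flat(W_1)$, which is the direction you need, but you still must supply a concrete family on which the rank is provably at least $58$.

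The families that actually work are different. Kronheimer and Mrowka's is built from the $240$ proper $4$-colorings of the faces of $W_1$: each such coloring $c_4$ gives an undotted half-foam $F(W_1,c_4) = (T \times \{0\}) \cup (W_1 \times [0,1])$, where $T$ is the union of the faces not colored $4$; these yield $20$ distinct half-foams of degree $-3$, and decorating each with $0$, $1$, $2$, or $3$ dots gives a set of $80$ half-foams whose Gram matrix has rank $58$. The present paper instead uses half-foams obtained by converting $W_1$ into a reducible web by a single Zip, Unzip, Saddle, or IH move and composing with the elementary cobordisms that produce bases for reducible webs. Your fallback suggestion --- adjoining vectors $J^\flat(G)[x]$ for cobordisms $G:K' \rightarrow W_1$ from reducible webs $K'$ --- is essentially this second construction, and with it the argument does go through; but it is the entire content of the proof rather than an optional refinement, so it needs to be promoted to the main line of the argument, together with the explicit computer verification that the resulting Gram matrix has rank $58$.
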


\begin{theorem}
(Kronheimer and Mrowka \cite{Kronheimer-2})
For the dodecahedral web $W_1$, we have $\dim J^\sharp(W_1) \leq 68$.
\end{theorem}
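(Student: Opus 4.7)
The plan is to bound $\dim J^\sharp(W_1)$ from above by producing, after a sequence of local surgeries, a spanning set whose size I can enumerate. The main tool I would use is an unoriented skein exact triangle for singular instanton theory: at each edge $e$ of a web $K$, there should be a long exact sequence relating $J^\sharp(K)$ to $J^\sharp(K_0)$ and $J^\sharp(K_1)$, where $K_0$ and $K_1$ are the two local replacements of a theta-neighborhood of $e$. From such a triangle follows the dimension inequality $\dim J^\sharp(K) \leq \dim J^\sharp(K_0) + \dim J^\sharp(K_1)$.

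First, I would select a small collection of edges $e_1,\dots,e_k$ of $W_1$ whose successive local resolution destroys the non-reducibility of the dodecahedron. Applying the exact triangle inductively at each $e_i$ produces a binary resolution tree with $2^k$ leaves, each labeled by a web that, by construction, is reducible. For reducible webs Kronheimer and Mrowka have proved $\dim J^\sharp = \Tait$, so each leaf dimension reduces to a combinatorial 3-edge-coloring count; any leaf with a bridge contributes $0$ by Theorem \ref{theorem:nonvanishing} and can be discarded. Summing the Tait numbers of the non-bridged leaves yields the desired upper bound for $\dim J^\sharp(W_1)$.

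The second step is the combinatorial book-keeping: exploiting the symmetry group of the dodecahedron, one expects the $2^k$ leaves to fall into a small number of orbits, so the sum should be amenable to direct evaluation. I would organize the enumeration so that the dihedral symmetry of $W_1$ around the chosen edges is preserved by the resolution, which both reduces the number of distinct Tait computations and gives a natural sanity check on the total.

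The main obstacle will be arranging the bound to equal precisely $68$ rather than something much larger. A blind choice of $e_1,\dots,e_k$ produces a valid but typically weak inequality, so the proof requires a carefully chosen cut set, and may demand recognizing cases in which one of the connecting maps in the exact triangle is injective or surjective so that the inequality becomes sharp. A secondary, foundational obstacle is that the skein exact triangle for $J^\sharp$ itself is not among the results quoted in the excerpt; establishing it is a gauge-theoretic step, analogous to the surgery exact triangles in ordinary instanton Floer homology, which I would approach by a neck-stretching argument applied to a one-parameter family of singular connections along the chosen edges.
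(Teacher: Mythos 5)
This statement is not proved in the paper you are reading: it is imported verbatim from Kronheimer and Mrowka's companion paper \emph{Exact triangles for $SO(3)$ instanton homology of webs} \cite{Kronheimer-2}, and the present paper only cites it (and only uses the resulting numerical bound $60 \leq \dim J^\sharp(W_1) \leq 68$). That said, your outline does match the strategy of the cited source at the level of architecture: Kronheimer and Mrowka establish unoriented skein-type exact triangles for $J^\sharp$ associated to local modifications of a web, deduce subadditivity of dimensions over the vertices of a resolution tree, resolve the dodecahedral web into webs whose $J^\sharp$-dimension is known (reducible/``simple'' webs, where $\dim J^\sharp = \Tait$ by \cite{Kronheimer-1}, and bridged webs, where it vanishes by Theorem \ref{theorem:nonvanishing}), and add up the contributions.

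However, as a proof your proposal has two genuine gaps, and together they are essentially the entire content of the result. First, the exact triangle you invoke is not available off the shelf: proving it is the main theorem of \cite{Kronheimer-2} and requires a substantial gauge-theoretic construction (a cobordism-induced chain map together with an analysis of the moduli spaces giving the connecting morphism), not merely a ``neck-stretching argument'' sketched in one sentence. You correctly flag this as an obstacle, but flagging it does not discharge it. Second, the combinatorial half is not carried out: you do not exhibit the edges $e_1,\dots,e_k$, the resolution tree, or the Tait numbers of its leaves, so no number is actually produced, and nothing in the proposal explains why the optimal accessible bound is $68$ rather than, say, $2\Tait(W_1)=120$ or worse. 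Since $\Tait(W_1)=60$ and the local-replacement relations of Remark \ref{remark:tait-local-replacements} force the leaf Tait numbers to sum to at least $60$, the whole difficulty is in controlling the overshoot down to $8$; a ``blind choice'' of cut set, as you acknowledge, will not do this. To complete the argument along these lines you would need to either reproduce the specific sequence of triangle applications from \cite{Kronheimer-2} or supply your own explicit resolution with the leaf-by-leaf Tait count.
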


Due to Theorem \ref{theorem:tait-sharp} and Corollary
\ref{cor:tait-flat}, it follows from these results that for the
dodecahedral web $W_1$ the current dimension bounds are
$58 \leq \dim J^\flat(W_1) \leq 60$ and
$60 \leq \dim J^\sharp(W_1) \leq 68$.

\begin{remark}
\label{remark:implications}
We should emphasize that even if it is the case that $\dim
J^\flat(W_1) = 58$ for the dodecahedral web $W_1$, as our results seem
to suggest, this would not invalidate Kronheimer and Mrowka's strategy
for proving the four-color theorem, since it leaves open the
possibility that $\dim J^\sharp(K) = \Tait(K)$ for all webs $K$.
Rather, if $\dim J^\flat(W_1) = 58$ this would rule out only one
possible implementation of their strategy, namely that of showing that
$J^\flat(K) = J^\sharp(K)$ for all webs $K$.
\end{remark}

Since $J^\flat(K)$ is defined in terms of an infinite number of
generators mod an infinite number of relations, it is not
clear whether $\dim J^\flat(K)$ can be algorithmically
computed.
Nevertheless, it is possible to algorithmically compute lower bounds
for $\dim J^\flat(K)$, as can be seen by considering the following
two facts.
First, Khovanov and Robert's foam evaluation formula shows that
$J^\flat(F) \in \F$ is algorithmically computable for closed foams
$F$.
Second, we have the following easy observation:

\begin{remark}
\label{remark:rank-bound}
Given a finite-dimensional subspace $W$ of the vector space $V(K)$
spanned by all half-foams with top boundary $K$, the rank of the
bilinear form $(-,-)$ restricted to $W$ bounds $\dim J^\flat(K)$ from
below.
\end{remark}

Since the restriction of the bilinear form $(-,-)$ to $W$ can be
determined by evaluating the closed foams resulting from all possible
pairings of a basis of half-foams for $W$, it follows that
lower bounds for $\dim J^\flat(K)$ can be algorithmically computed.

\section{Computer program}
\label{sec:program}

We have written a computer program in Mathematica to determine lower
bounds for $\dim J^\flat(K)$ by following the procedure described at
the end of Section \ref{sec:background}.
The program consists of two distinct components.
The first component takes as input a web $K$ and
produces as output a large set $S(K)$ of half-foams with top boundary
$K$.
The second component computes the rank of the bilinear
form $(-,-)$ restricted to the vector space $W(K)$ spanned by $S(K)$.
This computation is accomplished by applying Khovanov and Robert's
evaluation formula to the closed foams obtained by taking all possible
pairings of half-foams in $S(K)$.
The program is available from the author's website \cite{Boozer}.

\begin{remark}
\label{remark:foam-lists}
The program represents foams as lists
$(K_1 = \varnothing, E_1, K_2, E_2, \cdots, E_n, K_{n+1})$ in which
webs $K_i$ are interleaved with certain ``elementary'' cobordisms
$E_i:K_i \rightarrow K_{i+1}$ connecting adjacent pairs of webs.
The terminal web $K_{n+1}$ is $K_{n+1} = \varnothing$ for a closed
foam and $K_{n+1} = K$ for a half-foam with top boundary $K$.
The foam-representation lists can be thought of as describing
successive horizontal slices through the foam, where the web $K_i$
describes the $i$-th slice and the cobordism $E_i$ describes the
portion of the foam that lies between the $i$-th and $(i+1)$-th slice.
The allowed elementary cobordisms $E_i$ are shown in Figures
\ref{fig:elem-cobordisms-red} and \ref{fig:elem-cobordisms-nonred}.
\end{remark}

\subsection{Construction of generating set of half-foams}
\label{ssec:construct-gens}

The first component of the program takes as input a web $K$ and
produces as output a set $S(K)$ of half-foams with top boundary $K$.
The program relies on the fact that for certain ``reducible''
webs $K$ there is an algorithm for producing a basis of half-foams for
$J^\flat(K)$.

We first define the notion of reducibility.
Given a web $K$, we can consider the local replacements
shown in Figure \ref{fig:local-replacements-red} in which a
\emph{small face} of $K$, defined to be a disk, bigon, triangle, or
square, is eliminated to yield a simpler web $K'$.
We say that a web is \emph{reducible} if there is a series of such
local replacements that terminates in the empty web.
That is, the empty web is reducible, and a nonempty web $K$ is
reducible if $K_1'$ is reducible,
$K_2'$ is reducible, $K_3'$ is reducible, or $K_{4a}'$ and $K_{4b}'$
are both reducible, for some choice of local replacements of the form
shown in Figure \ref{fig:local-replacements-red}.

\begin{remark}
\label{remark:tait-local-replacements}
It is straightforward to show that the Tait number satisfies the
following relations for the local
replacements shown in Figure \ref{fig:local-replacements-red}:
\begin{enumerate}
  \item
    $\Tait(K) = 3\Tait(K_1')$.
  \item
    $\Tait(K) = 2\Tait(K_2')$.
  \item
    $\Tait(K) = \Tait(K_3')$.
  \item
    $\Tait(K) = \Tait(K_{4a}') + \Tait(K_{4b}')$.
\end{enumerate}
Here we have adopted the convention that
$\Tait(\varnothing) = 1$.
\end{remark}

\begin{figure}
  \centering
  \includegraphics[scale=0.7]{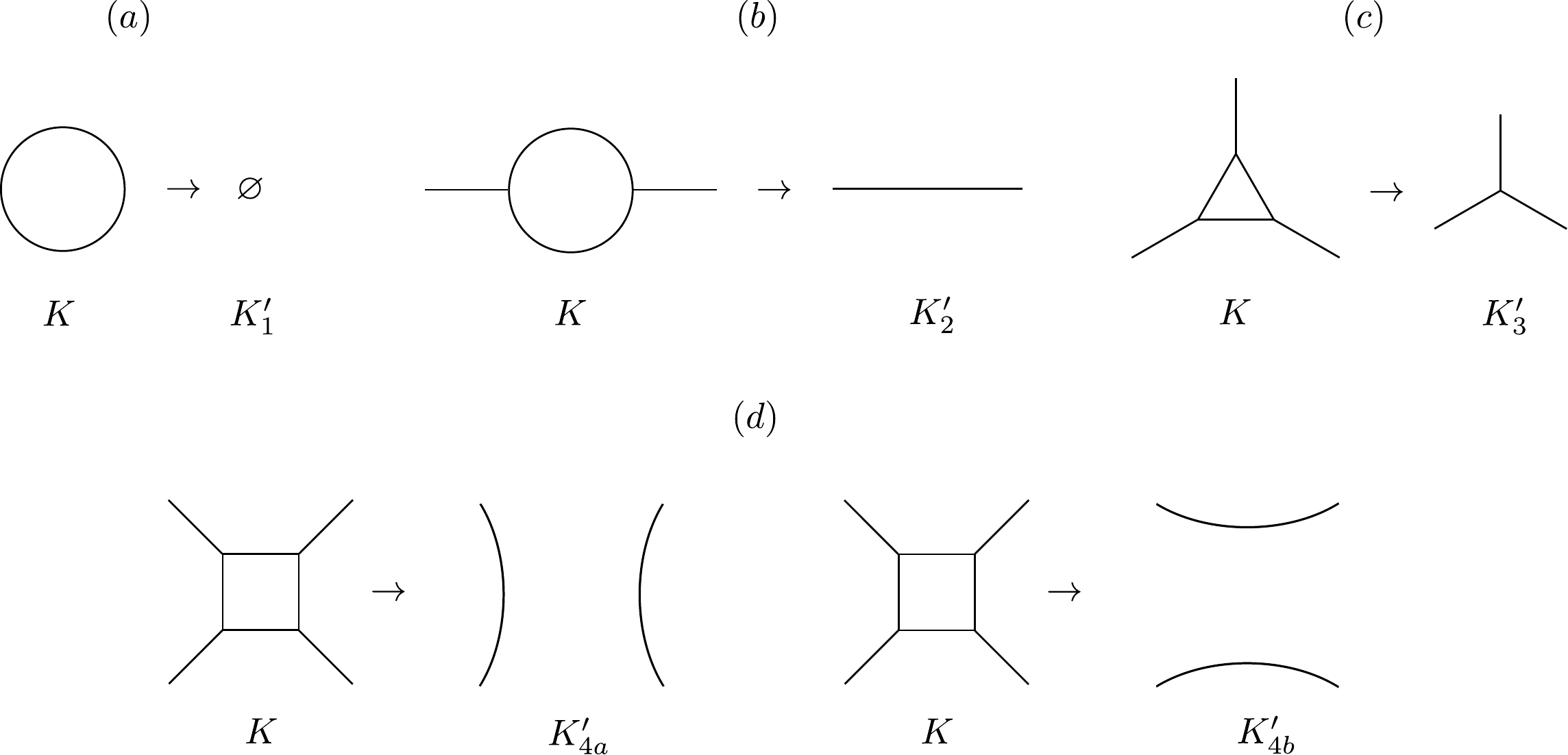}
  \caption{
    \label{fig:local-replacements-red}
    Local replacements $K \rightarrow K'$ for reducible webs.
    The web $K'$ is identical to $K$ outside the depicted region.
    (a) Disk elimination $K \rightarrow K_1'$.
    (b) Bigon elimination $K \rightarrow K_2'$.
    (c) Triangle elimination $K \rightarrow K_3'$.
    (d) Square elimination $K \rightarrow K_{4a}'$ and
    $K \rightarrow K_{4b}'$.
  }
\end{figure}

Using special properties of reducible webs, Khovanov and Robert show
that $\dim J^\flat(K) = \Tait(K)$ for reducible webs $K$
\cite{Khovanov}; moreover, their results provide an algorithm for
constructing a basis $S(K)$ of half-foams for $J^\flat(K)$.
The algorithm involves a set of elementary cobordisms shown in Figure
\ref{fig:elem-cobordisms-red}, which correspond to the local
replacements shown in Figure \ref{fig:local-replacements-red}.
For the empty web $K = \varnothing$ we take $S(K) = \varnothing$.
For a nonempty web $K$ we recursively apply the following rules:
\begin{enumerate}
  \item
  If a local replacement
  $K \rightarrow K_1'$ of the form shown in Figure
  \ref{fig:local-replacements-red}a yields a reducible web $K_1'$,
  then $S(K)$ is obtained by applying each of the elementary
  cobordisms $C_1,\dot{C}_1,\ddot{C}_1:K_1' \rightarrow K$ shown in
  Figure \ref{fig:elem-cobordisms-red}a to each half-foam in
  $S(K_1')$.

  \item
  If a local replacement $K \rightarrow K_2'$
  of the form shown in Figure \ref{fig:local-replacements-red}b
  yields a reducible web $K_2'$, then
  $S(K)$ is obtained by applying each of the elementary cobordisms
  $C_2,\dot{C}_2:K_2' \rightarrow K$ shown in Figure
  \ref{fig:elem-cobordisms-red}b to each half-foam in $S(K_2')$.

  \item
  If a local replacement $K \rightarrow K_3'$
  of the form shown in Figure \ref{fig:local-replacements-red}c yields
  a reducible web $K_3'$, then
  $S(K)$ is obtained by applying the elementary cobordism
  $C_3:K_3' \rightarrow K$ shown in Figure
  \ref{fig:elem-cobordisms-red}c to each half-foam in $S(K_3')$.

  \item
  If local replacements
  $K \rightarrow K_{4a}'$ and $K \rightarrow K_{4b}'$ of the form
  shown in Figure \ref{fig:local-replacements-red}d yield reducible
  webs $K_{4a}'$ and $K_{4b}'$, then $S(K)$ is the union of the two
  sets of half-foams obtained by applying the elementary
  cobordism $C_{4a}:K_{4a}' \rightarrow K$ to each half-foam in
  $S(K_{4a}')$ and the elementary cobordism
  $C_{4b}:K_{4b}' \rightarrow K$ to each half-foam in $S(K_{4b}')$,
  where $C_{4a}$ and $C_{4b}$ are as shown in
  Figure \ref{fig:elem-cobordisms-red}d.
\end{enumerate}
Note that the resulting basis $S(K)$ depends on the specific choices
of local replacements that we make.
The above algorithm allows for easy construction of the list
representations of half-foams described in Remark
\ref{remark:foam-lists}.

\begin{figure}
  \centering
  \includegraphics[scale=0.7]{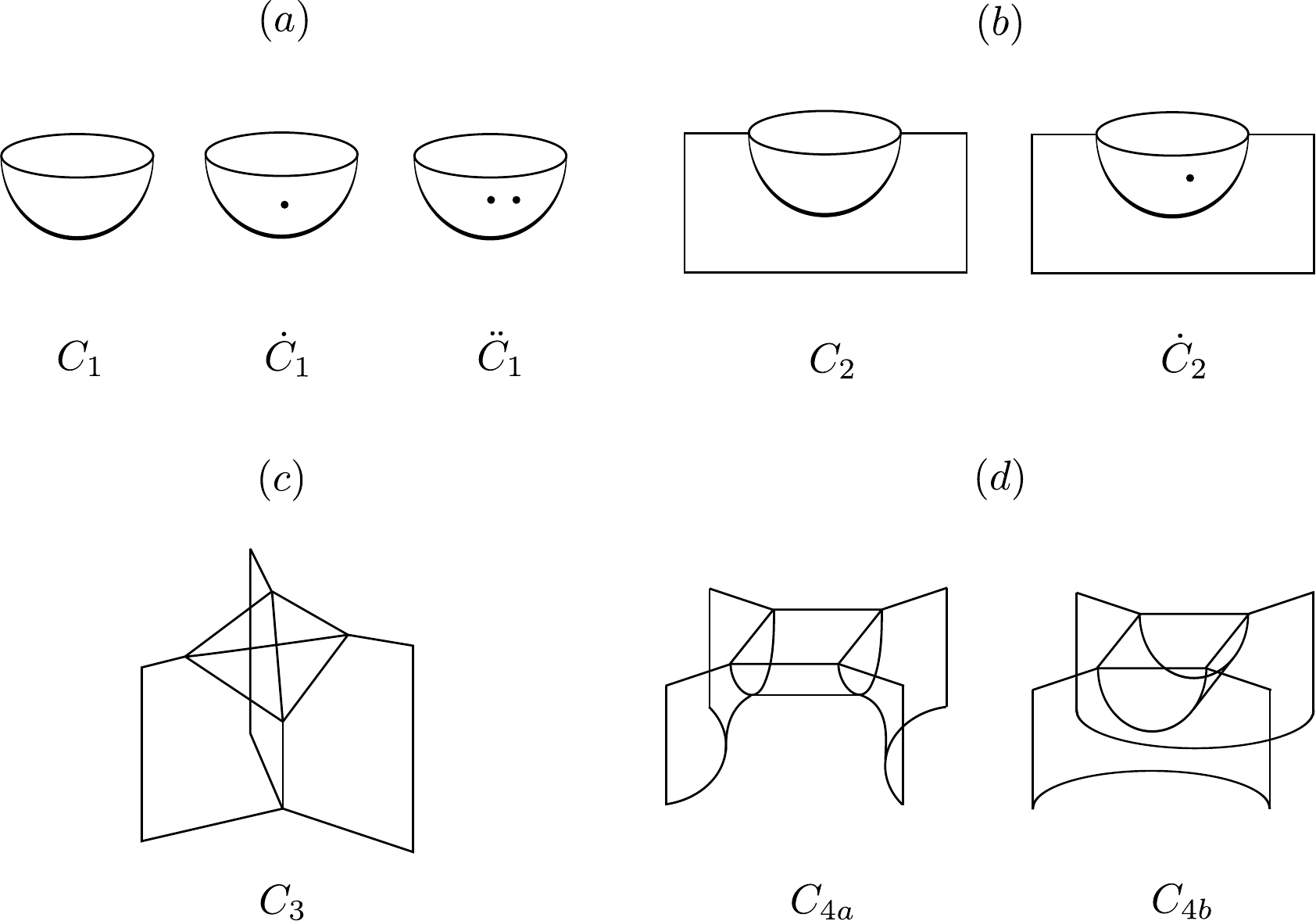}
  \caption{
    \label{fig:elem-cobordisms-red}
    Elementary cobordisms for reducible webs.
    The cobordisms are taken to be the identity outside of the
    depicted region.
    Note that the cobordisms $\dot{C}_1$, $\ddot{C}_1$, and
    $\dot{C}_2$ have facets that are decorated with dots.
  }
\end{figure}

For nonreducible webs $K$, there is no known algorithm for producing
a basis of half-foams for $J^\flat(K)$, and our goal instead is just
to produce a large set $S(K)$ of half-foams with top boundary $K$.
Ideally, we would like $S(K)$ to be large and diverse enough to
contain a spanning set for $J^\flat(K)$.
To construct the set $S(K)$, we use the fact that a nonreducible web
$K$ can often be converted into a reducible web $K'$ by making one of
the four local replacements shown in Figure
\ref{fig:local-replacements-nonred}.
If such is the case, then we can obtain a set of half-foams with
top boundary $K$ by applying the corresponding elementary cobordism
shown in Figure \ref{fig:elem-cobordisms-nonred}
to each of the half-foams in a generating set $S(K')$ constructed
using the above algorithm for reducible webs.
We construct $S(K)$ by taking the union of such sets constructed for
all local replacements $K \rightarrow K'$ of the form shown in Figure
\ref{fig:local-replacements-nonred} that yield reducible webs $K'$.
Note that the list representation, as described in Remark
\ref{remark:foam-lists}, of each half-foam in $S(K)$ has the
form $(K_1 = \varnothing, E_1, \cdots, E_n, K_{n+1} = K)$, where
for $i \in \{1, \cdots, n-1\}$ the cobordism $E_i$ one of the
cobordisms shown in Figure \ref{fig:elem-cobordisms-red} and
$E_n$ is one of the cobordisms shown in Figure
\ref{fig:elem-cobordisms-nonred}.

\begin{remark}
For simplicity, we do not actually use the above algorithm for
reducible webs to construct the generating sets $S(K')$ for
reducible webs $K'$.
Rather, given a reducible web $K'$ we apply a recursive algorithm in
which we attempt to eliminate disks, bigons, triangles, and squares,
in that order.
This algorithm is not guaranteed to reduce every reducible web
$K'$, and if it fails to do so we treat $K'$ as if it were
nonreducible.
Consequently, the generating sets $S(K)$ that we produce for
nonreducible webs $K$ may not always be as large as they could be.
\end{remark}

\begin{remark}
One could construct even larger generating sets by allowing more
complicated cobordisms than those shown in Figure
\ref{fig:elem-cobordisms-nonred}, or by applying several of these
cobordisms in succession.
However, for the example webs that we consider in Section
\ref{sec:results}, the sets $S(K)$ constructed as described above are
already sufficiently large that these generalizations seem unlikely to
yield stronger dimension bounds.
\end{remark}

\begin{figure}[t]
  \centering
  \includegraphics[scale=0.7]{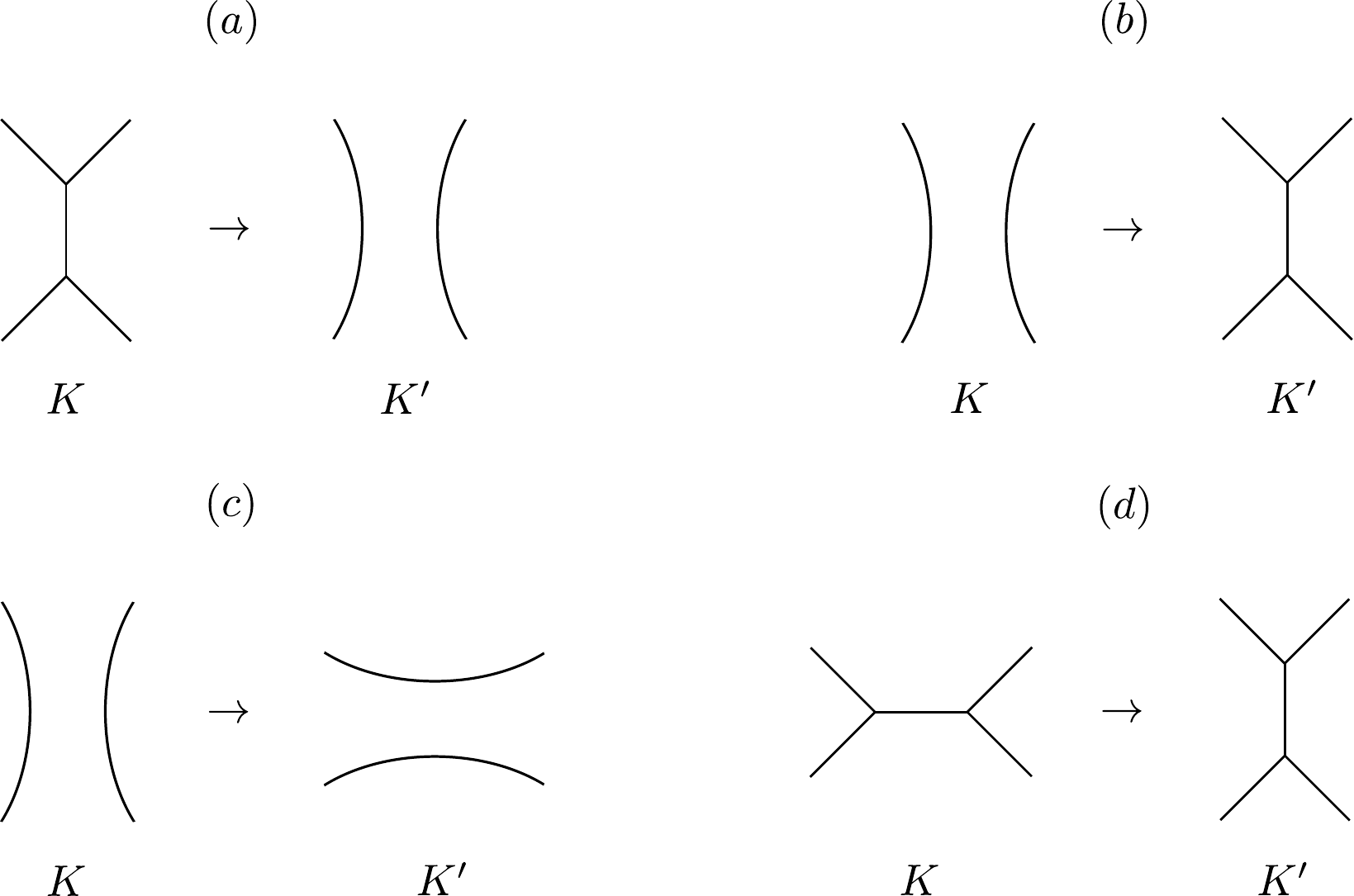}
  \caption{
    \label{fig:local-replacements-nonred}
    Local replacements $K \rightarrow K'$ for nonreducible webs.
    The web $K'$ is identical to $K$ outside the depicted region.
    (a) Zip.
    (b) Unzip.
    (c) Saddle.
    (d) IH.
  }
\end{figure}

\begin{figure}[t]
  \centering
  \includegraphics[scale=0.7]{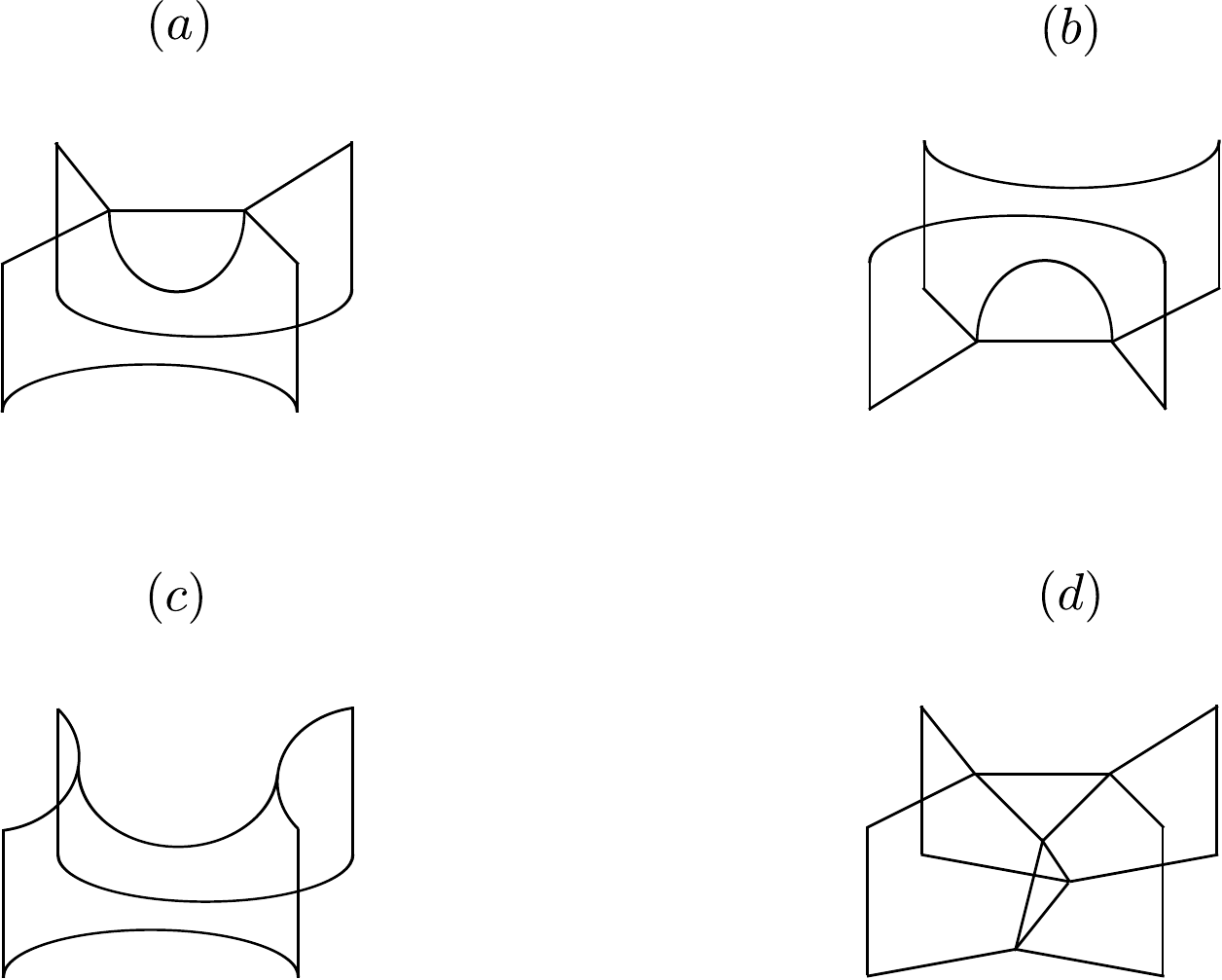}
  \caption{
    \label{fig:elem-cobordisms-nonred}
    Elementary cobordisms for nonreducible webs.
    The cobordisms are taken to be the identity outside of the
    depicted region.
    (a) Zip.
    (b) Unzip.
    (c) Saddle.
    (d) IH.
  }
\end{figure}

\subsection{Evaluation of closed foams}
\label{ssec:eval-formula}

The second component of the computer program takes as input a closed
foam $F$ and returns as output the closed foam evaluation
$J^\flat(F) \in \F$.
This component of the program is essentially a computer implementation
of Khovanov and Robert's closed foam evaluation formula
\cite{Khovanov}, which we briefly review here.

Define $\facets(F)$ to be the set of facets of a closed foam $F$.
A \emph{coloring} of a closed foam $F$ is a map
$c:\facets(F) \rightarrow \{1,2,3\}$,
and we will refer to 1, 2, and 3 as \emph{colors}.
A coloring $c$ of a foam $F$ is \emph{admissible} if the three
facets incident on any given seam of $F$ are assigned distinct colors.
Define $\adm(F)$ to be the (possibly empty) set of admissible colorings
of a closed foam $F$.

\begin{remark}
There are foams that do not have any admissible colorings (an example
is given in \cite{Khovanov}), and such foams can arise as pairs of
generating half-foams for the dodecahedral web.
Note that Khovanov and Robert define a foam to be admissible if
it is admissible in the sense that we have defined and in addition
satisfies an orientability condition; they then prove that any foam
that is admissible in the sense that we have defined automatically
satisfies the orientability condition.
\end{remark}

Define a polynomial ring in three variables $R' = \F[X_1,X_2,X_3]$ and
a ring $R'' = R'[(X_1+X_2)^{-1},(X_1+X_3)^{-1},(X_2+X_3)^{-1}]$ obtained
from $R'$ by inverting the elements $X_1+X_2$, $X_1+X_3$, and
$X_2+X_3$.
Define elementary symmetric polynomials
\begin{align}
  E_1 &= X_1 + X_2 + X_3, &
  E_2 &= X_1 X_2 + X_1 X_3 + X_2 X_3, &
  E_3 &= X_1 X_2 X_3,
\end{align}
and define $R = \F[E_1,E_2,E_3]$ to be the subring of $R'$ consisting
of symmetric polynomials in $X_1$, $X_2$, and $X_3$.
Given a closed foam $F$ with coloring $c$, define
$P(F,c) \in R'$ and $Q(F,c) \in R''$ by
\begin{align}
  \label{eqn:p}
  P(F,c) &= \prod_{f \in \facets(F)} X_{c(f)}^{d(f)}, \\
  \label{eqn:q}
  Q(F,c) &= \prod_{1 \leq i < j \leq 3}
  (X_i + X_j)^{\chi(F_{ij}(c))/2},
\end{align}
where $d(f) \in \Nats$ denotes the number of dots decorating the facet
$f$ and $\chi(F_{ij}(c))$ denotes the Euler characteristic of the
closed surface $F_{ij}(c)$ obtained by taking the closure of the union
of all the facets of $F$ colored either $i$ or $j$ by $c$.
Khovanov and Robert show that $F_{ij}(c)$ is always
orientable, so $\chi(F_{ij}(c))$ is even and thus $Q(F,c)$ is in fact
an element of $R''$.
Define a polynomial
\begin{align}
  \label{eqn:foam-eval-R}
  \langle F \rangle = \sum_{c \in \adm(F)} \frac{P(F,c)}{Q(F,c)} \in R.
\end{align}
That $\langle F \rangle$ is a polynomial, rather than just a
rational function, is not obvious from equation
(\ref{eqn:foam-eval-R}), but it is nonetheless true.
Khovanov and Robert's evaluation formula for
$J^\flat(F)$ is given by evaluating $\langle F \rangle$
at $E_1 = E_2 = E_3 = 0$:
\begin{align}
  \label{eqn:foam-eval-F}
  J^\flat(F) = \langle F \rangle |_{E_1 = E_2 = E_3 = 0} \in \F.
\end{align}
Note that from equations (\ref{eqn:foam-eval-R}) and
(\ref{eqn:foam-eval-F}) it follows that $J^\flat(F) = 0$ if $F$ has no
admissible colorings.

Our computer program implements Khovanov and Robert's closed foam
evaluation formula using the list representation of foams described in
Remark \ref{remark:foam-lists}.
Given a closed foam $F$, we
\begin{enumerate}
\item
Enumerate the facets of $F$.

\item
Compute the adjacency graph for $\facets(F)$.
The adjacency graph is an unoriented
graph with a vertex $v_i$ for each facet $f_i \in \facets(F)$ and an
edge connecting vertices $v_i$ and $v_j$ if and only if the closures
of the corresponding facets $f_i$ and $f_j$ intersect.

\item
Using the adjacency graph, enumerate the admissible colorings
of $F$.

\item
Compute $J^\flat(F) \in \F$ using equations
(\ref{eqn:p}), (\ref{eqn:q}), (\ref{eqn:foam-eval-R}), and
(\ref{eqn:foam-eval-F}).

\end{enumerate}

\begin{remark}
The most complicated step of the foam evaluation algorithm is step
(1), enumerating the facets of $F$.
To achieve this, we use the fact that the closed foams we consider
always have the form $F = F_1 \cup_K \bar{F}_2$ for half-foams $F_1$
and $F_2$ with top boundary $K$.
To obtain the facets of $F$, we enumerate the facets of $F_1$ and
$F_2$ separately and then combine the two sets of facets using a
gluing algorithm.
Given a half-foam $G$ with list representation
$(K_1 = \varnothing, E_1, \cdots, E_n, K_{n+1} = K)$, we
enumerate its facets by using a recursive algorithm to enumerate
the facets of the half-foams $G_i$ with list representations
$ (K_1 = \varnothing, E_1, \cdots, E_{i-1}, K_i)$ for
$i \in \{1, \cdots, n+1\}$.
It is interesting to note that as we work from $i=1$ to $i=n$ the
number of facets is strictly increasing, and at each step the
intersection of the facets with the web $K_i$ is a partition of
$K_i$.
The number of facets can remain constant or decrease
going from $n$ to $n+1$ if the last cobordism $E_n$ is a Saddle or
an Unzip (for example, two facets that were distinct in the foam $G_n$
can describe two pieces of the same facet in the foam $G_{n+1}$).
Also, the half-foams $G_1, G_2, \cdots, G_n$ always have admissible
colorings, but for all four possibilities of the last cobordism
$E_{n+1}$ (Zip, Unzip, Saddle, or IH), it is possible that the
half-foam $G_{n+1}$ has no admissible colorings.
\end{remark}

To any foam $F$, Khovanov and Robert associate an $\Ints$-valued
degree $\deg(F)$ that is given by
\begin{align}
  \label{eqn:deg}
  \deg(F) = 2d(F) - 2\chi(F) - \chi(s(F)),
\end{align}
where $d(F)$ is the total number of dots on $F$, $\chi(F)$ is the
Euler characteristic of $F$, and $\chi(s(F))$ is the Euler
characteristic of the union $s(F)$ of the seam points and tetrahedral
points of $F$.
We use equation (\ref{eqn:deg}) to compute the degree of each of the
elementary cobordisms shown in Figures
\ref{fig:elem-cobordisms-red} and
\ref{fig:elem-cobordisms-nonred} and display the results in Table
\ref{table:deg}.
Degree is additive under composition of foams,
\begin{align}
  \deg(F_1 \circ F_2) = \deg (F_1) + \deg (F_2), 
\end{align}
so the degree of any foam built by composing elementary cobordisms can
be computed by summing the degrees of each elementary cobordism.

\begin{table}[t]
\begin{tabular}{lrrrrrrrrrrrr}
  $E$ &
  $C_1$ & $\dot{C}_1$ & $\ddot{C}_1$ & $C_2$ & $\dot{C}_2$ & $C_3$ &
  $C_{4a}$ & $C_{4b}$ &
  Zip & Unzip & Saddle & IH \\
  $\deg(E)$ &
  -2 & 0 & 2 & -1 & 1 & 0 & 0 & 0 & 1 & 1 & 2 & 1
\end{tabular}
\medskip
\medskip
\caption{
  \label{table:deg}
  Degree $\deg(E)$ of each elementary cobordism $E$.
}
\end{table}

\begin{remark}
It is useful to view $R = \F[E_1,E_2,E_3]$ as a graded ring in which
$E_1$, $E_2$, and $E_3$ are assigned gradings $2$, $4$, and $6$,
respectively.
Khovanov and Robert show that the grading of $\langle F \rangle \in R$
for a closed foam $F$ is given by $\deg F$ when $\langle F \rangle$
is nonzero \cite[Theorem 2.17]{Khovanov}.
\end{remark}

Khovanov and Robert use the notion of foam degree to impose a grading
on the vector space $J^\flat(K)$ associated to a web $K$.
Recall that $J^\flat(K)$ is spanned by vectors of the form
$J^\flat(F)$, where $F$ is a half-foam with top boundary $K$.
We define the grading of the vector $J^\flat(F) \in J^\flat(K)$ to be
$\deg F$.
In general, given a graded finite-dimensional vector space $V$ we
define $V_i$ to be the subspace of $V$ spanned by vectors of degree
$i$ and we define the quantum dimension
$\qdim V \in \Ints[q,q^{-1}]$ of $V$ to be
\begin{align}
  \qdim V = \sum_i q^i \dim V_i.
\end{align}

The functor $J^\flat$ is a special case of a more general set of
functors that Khovanov and Robert define by evaluating closed foams in
various rings.
For our purposes it is useful to consider a functor
$\langle - \rangle_\phi$ from the category $\Foams$ to the category of
modules over the polynomial ring $\F[E]$.
We view $\F[E]$ as a graded ring with $\deg E = 6$.
For a closed foam $F$, we define
$\langle F \rangle_\phi = \phi(\langle F \rangle)$, where
$\langle F \rangle \in R = \F[E_1,E_2,E_3]$ is given by
equation (\ref{eqn:foam-eval-R}) and
$\phi:\F[E_1,E_2,E_3] \rightarrow \F[E]$ is the ring homomorphism
given by $E_1, E_2 \mapsto 0$,  $E_3 \mapsto E$.
Given a web $K$, we define $M(K)$ to be the free $\F[E]$-module
spanned by all half-foams with top boundary $K$.
We define a bilinear form
$(-,-)_\phi:M(K) \otimes M(K) \rightarrow \F[E]$ such that
$(F_1,F_2)_\phi = \langle F_1 \cup_K \bar{F}_2\rangle_\phi$.
Applying the universal construction described in
Section \ref{sec:background}, we define the $\F[E]$-module
$\langle K \rangle_\phi$ to be the quotient of $M(K)$ by the
orthogonal complement of $M(K)$ relative to $(-,-)_\phi$.
Khovanov and Robert prove the following theorem:

\begin{theorem}
\label{theorem:free-tait}
(Khovanov and Robert \cite[Proposition 4.18]{Khovanov})
For any web $K$, the $\F[E]$-module $\langle K \rangle_\phi$ is free
of rank $\Tait(K)$.
\end{theorem}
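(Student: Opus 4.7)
The plan is to produce an explicit basis of $\langle K \rangle_\phi$ as a free $\F[E]$-module, indexed by the Tait colorings of $K$. For each Tait coloring $\tau$ of $K$, I would construct a distinguished half-foam $F_\tau$ with top boundary $K$, for instance by choosing some auxiliary half-foam $F_0$ once and for all and then decorating its facets with dots in a pattern controlled by the coloring $\tau$ of the edges of $K$; the three facet colors $\{1,2,3\}$ correspond to the three edge colors of the Tait coloring. The claim then reduces to showing that the classes $[F_\tau]$ form an $\F[E]$-basis of $\langle K \rangle_\phi$.

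For linear independence, I would compute the Gram matrix $A_{\tau\tau'} = (F_\tau, F_{\tau'})_\phi = \langle F_\tau \cup_K \bar{F}_{\tau'}\rangle_\phi \in \F[E]$. The sum in equation (\ref{eqn:foam-eval-R}) runs over admissible colorings of the closed foam $F_\tau \cup_K \bar{F}_{\tau'}$. Because each half is rigid once $\tau$ (respectively $\tau'$) is fixed and because the admissibility condition along the gluing web $K$ forces the facet coloring of $F_\tau$ and $\bar{F}_{\tau'}$ to induce the same edge coloring of $K$, the only admissible colorings contributing are those with $\tau = \tau'$. This should produce a diagonal matrix $A$ with nonzero diagonal entries in $\F[E]$, giving $\F[E]$-linear independence of the $[F_\tau]$ and hence $\rk_{\F[E]} \langle K \rangle_\phi \geq \Tait(K)$.

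For the opposite inequality, I would argue by induction on web complexity that any half-foam $G$ with top $K$ lies, modulo the orthogonal complement of $M(K)$ relative to $(-,-)_\phi$, in the $\F[E]$-span of the $F_\tau$. The induction uses the local skein-type relations satisfied by $\langle - \rangle_\phi$ (neck-cutting, dot migration, sphere and theta evaluations, together with the disk, bigon, triangle and square simplifications corresponding to Figure \ref{fig:local-replacements-red}). Whenever the interior of $G$ contains a configuration matching the left-hand side of one of these relations, we rewrite; the matching combinatorial identities on the Tait side (Remark \ref{remark:tait-local-replacements}) guarantee the counts balance.

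The main obstacle is the reduction step: for a nonreducible top boundary $K$ such as the dodecahedron, the simplifying moves of Figure \ref{fig:local-replacements-red} cannot be applied to $K$ itself, so all reduction must occur in the interior of the half-foam. What is needed is a structural statement that interior skein relations alone suffice to bring any half-foam into the canonical form $p(E)\cdot F_\tau$, with no assumption that $K$ be reducible. Establishing this is the technical heart of the argument and is where the full Robert--Wagner evaluation machinery enters, by ensuring that the local relations derived from $\langle -\rangle$ are rich enough to close up the reduction globally rather than only on small faces of $K$.
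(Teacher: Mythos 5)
Note first that the paper does not prove this statement: it is imported verbatim as \cite[Proposition 4.18]{Khovanov}, so there is no internal proof to compare against. Judged on its own terms, your proposal has two genuine gaps. The first is the claimed diagonality of the Gram matrix $A_{\tau\tau'}=(F_\tau,F_{\tau'})_\phi$. The sum in equation (\ref{eqn:foam-eval-R}) runs over admissible colorings $c$ of the glued closed foam $F_\tau\cup_K\bar F_{\tau'}$, and this set of colorings is completely independent of the dot decorations encoding $\tau$ and $\tau'$: dots only change the numerators $P(F,c)$, never which $c$ are admissible. It is true that each admissible $c$ restricts to a Tait coloring of the cross-section $K$, but that is a property of $c$, not of $\tau$ or $\tau'$, so nothing forces the off-diagonal entries to vanish. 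Already for the circle web the analogous matrix in the basis of dotted disks is anti-diagonal rather than diagonal ($(D_i,D_j)_\phi=h_{i+j-2}$ evaluated under $\phi$), and there is no canonical bijection between dot patterns on a fixed $F_0$ and Tait colorings. Worse, a single underlying half-foam with varying dots cannot span in general: the paper's own data for $W_1$ show that even $20$ distinct underlying half-foams, each carrying $0$--$3$ dots, reach only rank $58$ at $E=0$, so the construction of the candidate basis itself needs to be rethought, not just its independence.

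The second gap is the one you flag yourself: the spanning step for nonreducible $K$ is ``the technical heart'' and is left entirely open, but it \emph{is} the theorem. An induction driven by the small-face eliminations of Figure \ref{fig:local-replacements-red} terminates only for reducible webs, and a normal-form result for half-foams over arbitrary $K$ via interior skein relations is not available; if it were available constructively at $E=0$ it would essentially answer Question \ref{question:tait-flat}, which the paper's computations suggest has a negative answer. The actual Khovanov--Robert argument circumvents this: after the specialization $\phi$ one passes to a localization (or the fraction field of $\F[E]$, suitably extended) in which the elements $X_i+X_j$ become invertible, the identity foam on $K$ splits into orthogonal idempotents indexed by Tait colorings, and the state space becomes a direct sum of $\Tait(K)$ rank-one pieces; freeness of rank $\Tait(K)$ over the PID $\F[E]$ then follows from finite generation together with torsion-freeness, the latter coming from the nondegenerate pairing into $\F[E]$ supplied by the universal construction. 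That ``generic semisimplicity'' mechanism, rather than a foam-by-foam reduction to canonical form, is what makes the upper bound $\rk\langle K\rangle_\phi\leq\Tait(K)$ accessible for nonreducible webs, and your outline would need to be reorganized around it.
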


In general, given a free graded $\F[E]$-module $M$ of finite rank $r$
we chose a homogeneous basis $B = \{m_1, \cdots, m_r\}$ of $M$
and define the quantum rank $\qrk M \in \Ints[q,q^{-1}]$ of $M$ to be
\begin{align}
  \qrk M = \sum_i q^i \cdot \#\{m_k \in B \mid \deg(m_k) = i\}.
\end{align}
We have the following easy corollary to Theorem
\ref{theorem:free-tait}, which subsumes Corollary
\ref{cor:tait-flat}:

\begin{corollary}
\label{cor:tait-flat-restate}
For any web $K$, we have that $\dim J^\flat(K) \leq \Tait(K)$ and
$\qdim J^\flat(K) \leq {\qrnk \langle K \rangle_\phi}$.
\end{corollary}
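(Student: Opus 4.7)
The plan is to obtain both inequalities from Theorem \ref{theorem:free-tait} by relating $J^\flat(K)$ to the mod-$E$ reduction of $\langle K \rangle_\phi$.

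First, I would construct a canonical surjection of graded $\F$-vector spaces
\[
\pi\colon \langle K \rangle_\phi / E\langle K \rangle_\phi \twoheadrightarrow J^\flat(K).
\]
The key observation is that composing $\phi\colon \F[E_1,E_2,E_3] \to \F[E]$ with the quotient $\F[E] \to \F$ sending $E \mapsto 0$ recovers the evaluation $E_1, E_2, E_3 \mapsto 0$ that defines $J^\flat$ from $\langle - \rangle$. Hence for any closed foam $F$, the scalar $J^\flat(F)$ is just $\langle F \rangle_\phi$ reduced modulo $E$, so the pairing $(-,-)$ on $V(K) = M(K) \otimes_{\F[E]} \F$ is the base change of $(-,-)_\phi$ along $\F[E] \to \F$. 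Consequently, if $m \in M(K)$ is orthogonal to all of $M(K)$ with respect to $(-,-)_\phi$, its image in $V(K)$ is orthogonal to all of $V(K)$ with respect to $(-,-)$. This gives a well-defined $\F$-linear map $\langle K \rangle_\phi \to J^\flat(K)$, and since $E$ acts as zero on $J^\flat(K)$, the map descends to $\pi$; surjectivity is immediate because each generating half-foam of $V(K)$ lifts tautologically to $M(K)$.

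Second, by Theorem \ref{theorem:free-tait}, $\langle K \rangle_\phi$ is a free graded $\F[E]$-module of rank $\Tait(K)$. Fixing a homogeneous basis with degrees $d_1, \ldots, d_{\Tait(K)}$, the quotient $\langle K \rangle_\phi / E\langle K \rangle_\phi$ has an $\F$-basis in those same degrees, so
\[
\dim \bigl(\langle K \rangle_\phi / E\langle K \rangle_\phi\bigr) = \Tait(K), \qquad \qdim \bigl(\langle K \rangle_\phi / E\langle K \rangle_\phi\bigr) = \qrnk \langle K \rangle_\phi.
\]
The surjection $\pi$ then yields both $\dim J^\flat(K) \leq \Tait(K)$ and, coefficient-wise in $\Ints[q,q^{-1}]$, the bound $\qdim J^\flat(K) \leq \qrnk \langle K \rangle_\phi$.

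The only substantive step is checking that $\pi$ is well-defined, which amounts to naturality of the universal construction under the base change $\F[E] \to \F$. Since the pairing on $V(K)$ is literally the mod-$E$ reduction of the pairing on $M(K)$, and both $J^\flat(K)$ and $\langle K \rangle_\phi$ are obtained by quotienting by orthogonal complements, this is a direct diagram chase and not a serious obstacle; the real content of the corollary is packaged inside Theorem \ref{theorem:free-tait}.
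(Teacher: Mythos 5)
Your proposal is correct and follows essentially the same route as the paper: the paper's entire proof consists of the observation that $(F_1,F_2)$ is the evaluation of $(F_1,F_2)_\phi$ at $E=0$, so that $M(K)^\perp$ maps into $V(K)^\perp$, with the induced surjection onto $J^\flat(K)$ and the appeal to Theorem \ref{theorem:free-tait} left implicit. You have simply made that surjection $\pi$ and the resulting graded dimension count explicit, which is a faithful (and somewhat more careful) rendering of the intended argument.
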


\begin{proof}
Recall that $J^\flat(K)$ is the $\F$-vector space obtained by taking
the quotient of $V(K)$ by the orthogonal
complement of $V(K)$ relative to $(-,-)$, and
$\langle K \rangle_\phi$ is the $\F[E]$-module obtained by taking
the quotient of $M(K)$ by the orthogonal
complement of $M(K)$ relative to $(-,-)_\phi$.
But $(F_1,F_2) \in \F$ can be obtained by evaluating
$(F_1,F_2)_\phi \in \F[E]$ at\ $E=0$, so $(F_1,F_2) = 0$ whenever
$(F_1,F_2)_\phi = 0$.

\end{proof}

\section{Results}
\label{sec:results}

We use the computer program described in Section
\ref{sec:program} to obtain lower bounds $\ell(K)$ on the dimension of
$J^\flat(K)$ for the example webs $K$ shown in
Figure \ref{fig:example-webs}.
The results are summarized in Table \ref{table:results}.
For each web $K$, we use the algorithm described in Section
\ref{ssec:construct-gens} to construct a set
$S(K) = \{F_1, \cdots, F_N\}$ of
$N$ half-foams with top boundary $K$.
For increasing values of $n$, we compute lower bounds
$\ell_n(K)$ on $\dim J^\flat(K)$ by calculating the rank of
the bilinear form $(-, -)$ restricted to the vector space spanned by
$\{F_1, \cdots, F_n\}$, using the closed-foam evaluation algorithm
described in Section \ref{ssec:eval-formula}.
In order to obtain results in a reasonable amount of time, we compute
$\ell_n(K)$ only up to an index $n = N_e$ that is less
than the total number of half-foams $N$ that we have generated.
We note that $\ell_n(K)$ is a nondecreasing function of $n$ that
saturates at a value $\ell(K)$ for some index $n = N_\ell$; that
is, $\ell_n(K) = \ell(K)$ for $N_\ell \leq n \leq N_e$, and $N_\ell$
is the smallest index with this property.
The saturation value $\ell(K)$ is the lower bound on
$\dim J^\flat(K)$ that is listed in Table \ref{table:results}.

\begin{figure}
  \centering
  \includegraphics[scale=0.7]{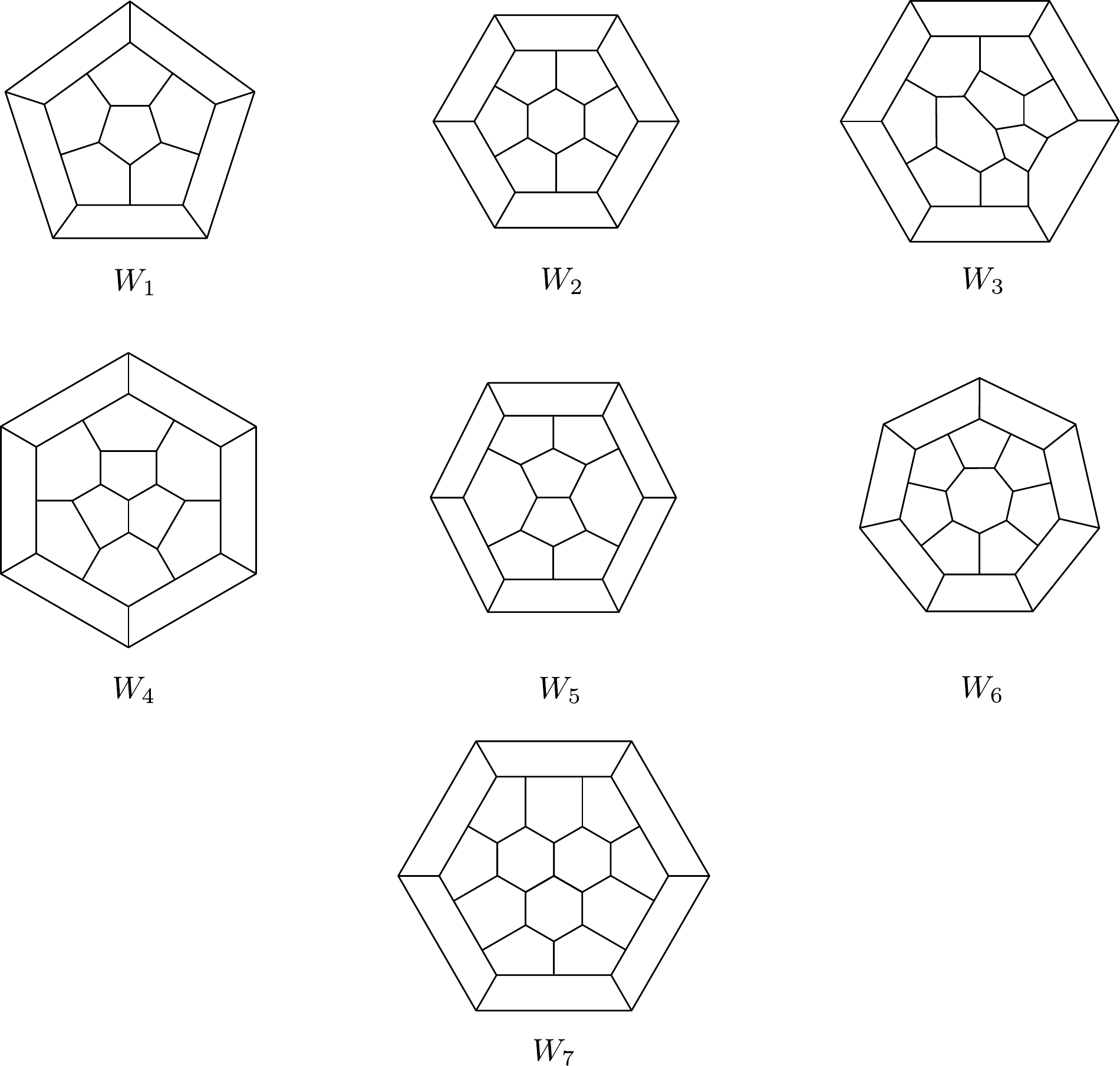}
  \caption{
    \label{fig:example-webs}
    Example webs.
    The web $W_1$ is the dodecahedral web.
  }
\end{figure}

\begin{table}
  \centering
 \begin{tabular}{|lrrrrr|}
    \hline
    Web $K$ & $\ell(K)$ & $\Tait(K)$ & $N_\ell$ & $N_e$ & $N$ \\
    \hline
    $W_1$
    %(webdodec, previously $W_1$)
    & 58 & 60 &  156 & 6727 & 11\,160 \\
    $W_2$
    %(webhex, previously $W_7$)
    & 120 & 120 & 747 & 5322 & 27\,792 \\
    $W_3$
    %(webF28, previously $W_8$)
    & 162 & 162 & 822 & 4902 & 45\,960 \\
    $W_4$
    %(web28full, previously $W_{10}$)
    & 178 & 180 & 1193 & 6351 & 47\,196 \\
    $W_5$
    %(web26full, previously $W_{11}$)
    & 188 & 192 & 2447 & 7153 & 40\,704 \\
    $W_6$
    %(websept, previously $W_{13}$)
    & 248 & 252 & 1726 & 6331 & 53\,172 \\
    $W_7$
    %(webtrihex, previously $W_{14}$)
    & 308 & 312 & 1190 & 5458 & 101\,970 \\
    \hline
 \end{tabular}
 \medskip
 \medskip
  \caption{
    Lower bounds $\ell(K)$ on $\dim J^\flat(K)$ and Tait number
    $\Tait(K)$ for example webs $K$.
    The numbers $N_\ell$, $N_e$, and $N$ are explained in the main
    text.
    \label{table:results}
  }
 \end{table}

\begin{remark}
A useful class of example webs to consider is the class of
\emph{fullerene graphs}; these are planar trivalent graphs with 12
pentagonal faces and an arbitrary number of hexagonal faces.
Because they contain no small faces, fullerene graphs are
always nonreducible.
A computer program for enumerating fullerene graphs is described in
\cite{Brinkmann}.
The webs $W_1$, $W_2$, and $W_5$ are the unique fullerene graphs
with 20, 24, and 26 vertices, respectively; $W_3$ and $W_4$ are the
two fullerene graphs with 28 vertices; and $W_7$ is one of 6
fullerene graphs with 34 vertices.
\end{remark}

As an example, consider the dodecahedral web $W_1$ shown in Figure
\ref{fig:example-webs}.
A graph of $\ell_n(W_1)$ versus $n$ is shown in Figure \ref{fig:graph}.
The Tait number of $W_1$ is $\Tait(W_1) = 60$, the lower bound on
$\dim J^\flat(W_1)$ computed by our program is $\ell(W_1) = 58$, and this
bound is attained after examining $N_\ell = 156$ of the $N=11\,160$
half-foams that we constructed via the algorithm described in Section
\ref{ssec:construct-gens}.
The lower bound remains $58$ even after examining
$N_e = 6\,727$ of the $N=11\,160$ half-foams we constructed, which
suggests that $\dim J^\flat(W_1) = 58$.
If so, this would answer Question
\ref{question:tait-flat} in the negative.

\begin{figure}[t]
  \centering
  \includegraphics{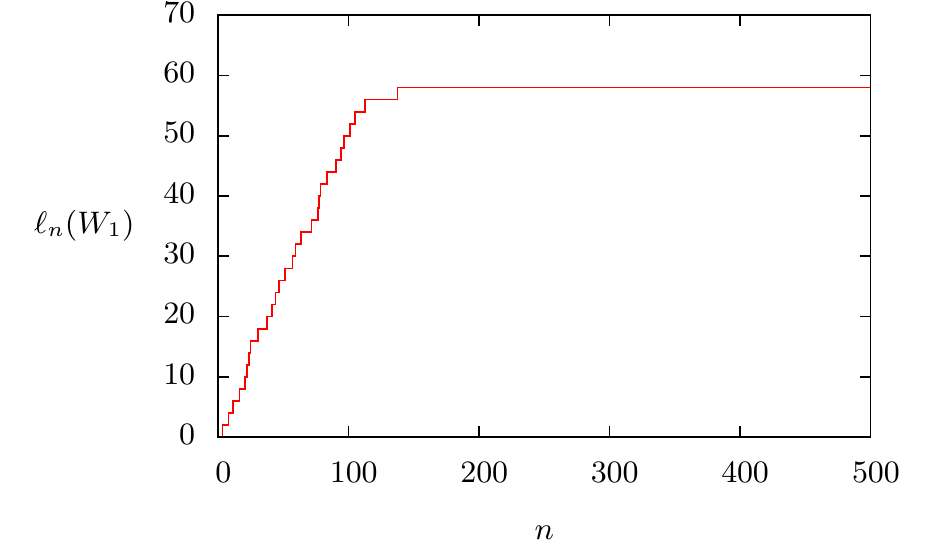}
  \caption{
    \label{fig:graph}
    Graph of $\ell_n(W_1)$ versus $n$ for the dodecahedral web $W_1$.
    }
\end{figure}

\begin{remark}
We still obtain the lower bound $\ell(W_1) = 58$ for the dodecahedral
web $W_1$ even if we use a smaller generating set $S(W_1)$ consisting
of half-foams constructed using only one of the four types of
cobordisms shown in Figure \ref{fig:elem-cobordisms-nonred}; that is,
if we use only Zip cobordisms we obtain $\ell(W_1) = 58$, if we use
only Unzip cobordisms we obtain $\ell(W_1) = 58$, if we use only
Saddle cobordisms we obtain $\ell(W_1) = 58$, and if use only IH
cobordisms we obtain $\ell(W_1) = 58$.
This fact provides additional evidence that perhaps
$\dim J^\flat(W_1) = 58$.
\end{remark}

\begin{remark}
Kronheimer and Mrowka also obtain the lower bound $\ell(W_1) = 58$ for
the dodecahedral web $W_1$ \cite{Kronheimer-1}.
Their lower bound is obtained in a manner similar to ours, but with
a generating set of half-foams constructed as follows.
Given a 4-coloring $c_4$ of the faces of a web $K \subset S^2$;
that is, a map
$c_4:\{\textup{faces of $K$}\} \rightarrow \{1,2,3,4\}$ such that no
two adjacent faces share the same color, let $T$ denote the union of
the faces of $K$ that are not colored $4$ and define an undotted
half-foam
\begin{align}
  F(K,c_4) =
  (T \times \{0\}) \cup (K \times [0,1]) \subset S^2 \times \Reals.
\end{align}
There are 240 distinct 4-colorings of the faces of the dodecahedral
web $W_1$, corresponding to 20 distinct half-foams
$\{F(W_1,c_4)\}$, each of which has degree -3.
To each of the half-foams $\{F(W_1,c_4)\}$ one can add 0, 1, 2, or 3
dots to obtain half-foams in degrees -3, -1, 1, or 3.
The resulting generating set of dotted half-foams yields the lower
bound $\ell(W_1) = 58$.
We computed lower bounds $\ell(K)$ for the example webs $W_1$, $W_2$,
$W_3$, $W_4$, $W_5$, and $W_6$
shown in Figure \ref{fig:example-webs} using generating sets
constructed in a similar manner, but, except for the dodecahedral web
$W_1$, the bounds we obtained by this method are strictly weaker than
those shown in Table \ref{table:results}.
\end{remark}

It is also of interest to compute lower bounds
on the quantum dimension $\qdim J^\flat(K)$ for each example web $K$.
As before, for each web $K$ the computer program generates a large
set $S(K)$ of half-foams with top boundary $K$.
Recall that we defined $V(K)$ and $M(K)$ to be the $\F$-vector
space and free $\F[E]$-module spanned by all half-foams with top
boundary $K$.
The set of half-foams $S(K)$ spans an $\mathbbm{F}$-vector space
$W(K) \subset V(K)$ and a free $\mathbbm{F}[E]$-module
$N(K) \subset M(K)$.
We define orthogonal complement spaces
$V(K)^\perp$, $W(K)^\perp$, $M(K)^\perp$, and $N(K)^\perp$ by
\begin{align}
  V(K)^\perp &=
  \{v \in V(K) \mid \textup{$(v,w) = 0$ for all $w \in V(K)$}\}
  \subseteq V(K), \\
  W(K)^\perp &=
  \{v \in W(K) \mid \textup{$(v,w) = 0$ for all $w \in W(K)$}\}
  \subseteq W(K), \\
  M(K)^\perp &=
  \{v \in M(K) \mid \textup{$(v,w)_\phi = 0$ for all $w \in M(K)$}\}
  \subseteq M(K), \\
  N(K)^\perp &=
  \{v \in N(K) \mid \textup{$(v,w)_\phi = 0$ for all $w \in N(K)$}\}
  \subseteq N(K).
\end{align}
Then
\begin{align}
  J^\flat(K) &= V(K)/V(K)^\perp, &
  \langle K \rangle_\phi = M(K)/M(K)^\perp.
\end{align}
We define $\ell(K)$ and $\ell_q(K)$ to be the dimension and quantum
dimension of $W(K)/W(K)^\perp$:
\begin{align}
  \ell(K) &= \dim (W(K)/W(K)^\perp), &
  \ell_q(K) &= \qdim (W(K)/W(K)^\perp).
\end{align}
The $\F[E]$-module $N(K)/N(K)^\perp$ is free, as can be shown using an
argument similar that used in
\cite[Proposition 4.4]{Khovanov} and the fact that $\F[E]$ is a PID,
and we define $r(K)$ and $r_q(K)$ to be its rank and quantum rank:
\begin{align}
  r(K) &= \rk (N(K)/N(K)^\perp), &
  r_q(K) &= \qrk (N(K)/N(K)^\perp).
\end{align}
By the same reasoning that yields Corollary
\ref{cor:tait-flat-restate}, we have that
\begin{align}
  \ell(K) \leq r(K), &&
  \ell_q(K) \leq r_q(K).
\end{align}
We have the following generalization of Remark
\ref{remark:rank-bound}:

\begin{theorem}
\label{theorem:lq-ineq}
For all webs $K$ we have that
$\ell_q(K) \leq \qdim J^\flat(K)$.
\end{theorem}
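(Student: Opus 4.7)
The plan is to realize $W(K)/W(K)^\perp$ as a graded quotient of a graded subspace of $J^\flat(K)$, so that the quantum dimension inequality follows from coefficient-wise comparison in each degree.

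First I would observe that all the vector spaces in sight are naturally $\Ints$-graded. The spaces $V(K)$ and $W(K)$ are spanned by half-foams, each of which carries a degree given by equation (\ref{eqn:deg}), so they inherit gradings. The pairing $(F_1,F_2) = J^\flat(F_1 \cup_K \bar{F}_2)$ is homogeneous of degree $0$ in the following precise sense: by the remark following Table \ref{table:deg}, $J^\flat(F) \in \F$ can be nonzero only when $\deg F = 0$, and since reflection preserves degree, $\deg(F_1 \cup_K \bar{F}_2) = \deg F_1 + \deg F_2$. Hence $(F_1,F_2) = 0$ unless $\deg F_1 + \deg F_2 = 0$, and a routine component-by-component argument shows that $V(K)^\perp$ and $W(K)^\perp$ are graded subspaces. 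Consequently $J^\flat(K) = V(K)/V(K)^\perp$ and $W(K)/W(K)^\perp$ are naturally graded.

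Next I would compare these quotients via the intermediate object $W(K)/(W(K) \cap V(K)^\perp)$. The key observation is the inclusion of graded subspaces
\begin{align*}
W(K) \cap V(K)^\perp \;\subseteq\; W(K)^\perp,
\end{align*}
which holds because any $v \in W(K)$ that annihilates all of $V(K)$ in particular annihilates all of $W(K)$. This yields a graded surjection
\begin{align*}
W(K)/(W(K)\cap V(K)^\perp) \;\twoheadrightarrow\; W(K)/W(K)^\perp,
\end{align*}
and since both sides are finite-dimensional graded vector spaces, a surjection implies the coefficient-wise inequality $\qdim(W(K)/W(K)^\perp) \leq \qdim(W(K)/(W(K)\cap V(K)^\perp))$ in $\Ints_{\geq 0}[q,q^{-1}]$. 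On the other hand, the inclusion $W(K) \hookrightarrow V(K)$ descends to a graded injection
\begin{align*}
W(K)/(W(K)\cap V(K)^\perp) \;\hookrightarrow\; V(K)/V(K)^\perp = J^\flat(K),
\end{align*}
which gives the reverse-direction coefficient-wise inequality $\qdim(W(K)/(W(K)\cap V(K)^\perp)) \leq \qdim J^\flat(K)$. Chaining the two bounds yields $\ell_q(K) \leq \qdim J^\flat(K)$.

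There is no real obstacle here; the only subtlety is the grading bookkeeping, namely checking that both $V(K)^\perp$ and $W(K)^\perp$ are graded subspaces. That reduces to the observation that the pairing is concentrated in complementary degrees, which in turn is Khovanov and Robert's statement that $\langle F \rangle$ is homogeneous of degree $\deg F$ combined with the fact that evaluation at $E_1 = E_2 = E_3 = 0$ kills everything in positive degree. Once the grading is verified, the inequality reduces, degree by degree, to the ungraded inequality of Remark \ref{remark:rank-bound}, applied to each graded piece separately.
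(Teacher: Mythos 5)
Your proposal is correct and follows essentially the same route as the paper: the intermediate space $W(K)/(W(K)\cap V(K)^\perp)$ is exactly the image $\bar{W}(K)$ of $W(K)$ in $J^\flat(K)$ used in the paper's proof, and you sandwich $\ell_q(K)$ via the same surjection (from $W(K)\cap V(K)^\perp \subseteq W(K)^\perp$) and the same inclusion into $J^\flat(K)$. The only difference is that you spell out the grading bookkeeping (that the pairing is concentrated in complementary degrees, so the orthogonal complements are graded subspaces), which the paper leaves implicit.
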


\begin{proof}
Let $\bar{W}(K) = W(K)/V(K)^\perp$ denote the image of $W(K)$ in
the quotient space $J^\flat(K) = V(K)/V(K)^\perp$.
It is clear that
\begin{align}
  \label{eqn:qrk-1}
  \qdim \bar{W}(K) \leq \qdim J^\flat(K).
\end{align}
Since $W(K) \cap V(K)^\perp \subseteq W(K)^\perp$, we have a
surjective map $\bar{W}(K) \rightarrow W(K)/W(K)^\perp$, and thus
\begin{align}
  \label{eqn:qrk-2}
  \qdim \bar{W}(K) \geq \qdim W(K)/W(K)^\perp = \ell_q(K).
\end{align}
Equations (\ref{eqn:qrk-1}) and (\ref{eqn:qrk-2}) yield the desired
result.
\end{proof}

For each example web $K$, we compute $\ell_q(K)$ and $r_q(K)$ as
follows.
We first consider $r_q(K)$.
We enumerate the spanning set $S(K)$ of $N(K)$ as
$S(K) = \{F_1, \cdots, F_n\}$ and define an $n \times n$ matrix $A$
whose matrix elements are given by
\begin{align}
  \label{eqn:smith-1a}
  A_{ij} = (F_i,F_j)_\phi \in \F[E].
\end{align}
Note that $A_{ij}$ is either zero or a nonnegative power of $E$.
We perform a Smith decomposition of $A$ to express it in the form
\begin{align}
  \label{eqn:smith-2a}
  A = SBT,
\end{align}
where $S$ and $T$ are invertible $n \times n$ matrices and $B$ is a
diagonal $n \times n$ matrix in which each matrix element along the
diagonal is either zero or a nonnegative power of $E$:
\begin{align}
  \label{eqn:smith-0}
  B = \diag\{E^{r_1}, \cdots, E^{r_m}, 0, \cdots, 0\}.
\end{align}
From equations (\ref{eqn:smith-1a}) and (\ref{eqn:smith-2a}), it
follows that
\begin{align}
  \label{eqn:smith-1}
  (F_i,F_j)_\phi = \sum_{k=1}^n S_{ik} B_{kk} T_{kj}.
\end{align}
Define sets of free homogeneous generators $\{g_1, \cdots, g_m\}$ and
$\{\tilde{g}_1, \cdots, \tilde{g}_m\}$ for $N(K)/N(K)^\perp$ by
\begin{align}
  \label{eqn:smith-2}
  g_k &= \sum_{i=1}^n (S^{-1})_{ki} F_i, &
  \tilde{g}_k &= \sum_{j=1}^n (T^{-1})_{jk} F_j.
\end{align}
From equations (\ref{eqn:smith-0}), (\ref{eqn:smith-1}), and
(\ref{eqn:smith-2}) it follows that
\begin{align}
  \label{eqn:smith-3}
  (g_i, \tilde{g}_j)_{\phi} &=
  \left\{
  \begin{array}{ll}
    E^{r_i} & \quad \mbox{if $i=j$,} \\
    0 &
    \quad \mbox{otherwise.}
  \end{array}
  \right.
\end{align}
The degree of the generator $g_k$ is given by
\begin{align}
  \deg g_k = \deg {(S^{-1})_{ki}} + \deg F_i
\end{align}
for any value of $i$ for which $(S^{-1})_{ki}$ is nonzero, and
the degree of the generator $\tilde{g}_k$ is given by
\begin{align}
  \deg \tilde{g}_k = \deg {(T^{-1})_{jk}} + \deg F_j
\end{align}
for any value of $j$ for which $(T^{-1})_{jk}$ is nonzero.
From equation (\ref{eqn:smith-3}) it follows that
\begin{align}
  \deg g_i + \deg \tilde{g}_i = \deg E^{r_i} = 6r_i.
\end{align}
The quantities $r(K)$ and $r_q(K)$ are then given by
\begin{align}
  r(K) &= \rank N(K)/N(K)^\perp = m, &
  r_q(K) &= \qrk N(K)/N(K)^\perp = \sum_{i=1}^m q^{\deg g_i}.
\end{align}
Generating sets for the vector space $W(K)/W(K)^\perp$ can be obtained
in a similar fashion by evaluating the matrices $S$, $B$, and $T$ at
$E=0$, and these generating sets can be used to compute $\ell(K)$ and
$\ell_q(K)$.

The computer results are summarized in Table \ref{table:results-qdim}
for the example webs shown in Figure \ref{fig:example-webs}.
These results give a proof of
Theorem \ref{theorem:results-exact} from the Introduction, which we
restate in a stronger form here:

\begin{theorem}
For the webs $W_2$ and $W_3$ shown in Figure \ref{fig:example-webs} we
have that
$\dim J^\flat(K) = \rk {\langle K \rangle_\phi} = \Tait(K)$ and
$\qdim J^\flat(K) = \qrk {\langle K \rangle_\phi} = \ell_q(K)$ for
$\ell_q(K)$ as shown in Table \ref{table:results-qdim}.
\end{theorem}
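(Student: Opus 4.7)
The plan is to combine the computational lower bounds $\ell(K)$ and $\ell_q(K)$ produced by the Smith-decomposition procedure above with the upper bounds from Corollary \ref{cor:tait-flat-restate} and Theorem \ref{theorem:free-tait}. First I would run the program on $W_2$ and $W_3$ and read off $\ell(K)$, $r(K)$, $\ell_q(K)$, and $r_q(K)$. From Table \ref{table:results} one already has $\ell(W_2) = 120 = \Tait(W_2)$ and $\ell(W_3) = 162 = \Tait(W_3)$, while Theorem \ref{theorem:free-tait} gives $r(K) = \rk \langle K \rangle_\phi = \Tait(K)$ for free.

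The non-quantum equalities $\dim J^\flat(K) = \Tait(K) = \rk \langle K \rangle_\phi$ then drop out immediately from the sandwich
\[ \ell(K) \;\leq\; \dim J^\flat(K) \;\leq\; \Tait(K), \]
whose left inequality is Remark \ref{remark:rank-bound} and whose right inequality is Corollary \ref{cor:tait-flat-restate}; having checked computationally that $\ell(K) = \Tait(K)$, all three quantities collapse.

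For the quantum statement I would argue as follows. The identity $\ell(K) = r(K)$ (which follows from the just-established dimension equality together with Theorem \ref{theorem:free-tait}) forces every exponent $r_i$ appearing in the Smith form (\ref{eqn:smith-0}) to vanish: $\ell(K)$ counts those $r_i$ equal to $0$ while $r(K)$ counts all of them. With every $r_i = 0$, relation (\ref{eqn:smith-3}) specializes at $E=0$ to a perfect pairing $(g_i, \tilde{g}_j) = \delta_{ij}$ on $W(K)/W(K)^\perp$, so the images of the $g_i$ form a homogeneous basis of $W(K)/W(K)^\perp$ with the same degrees as their lifts in $N(K)/N(K)^\perp$. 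Hence $\ell_q(K) = r_q(K)$. Combining Theorem \ref{theorem:lq-ineq} with Corollary \ref{cor:tait-flat-restate} then gives
\[ \ell_q(K) \;\leq\; \qdim J^\flat(K) \;\leq\; \qrk \langle K \rangle_\phi \;=\; r_q(K) \;=\; \ell_q(K), \]
and equality throughout yields $\qdim J^\flat(K) = \qrk \langle K \rangle_\phi = \ell_q(K)$, which is the quantum part of the theorem.

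The main obstacle is computational rather than conceptual: one must actually perform a Smith decomposition of the $N_e \times N_e$ pairing matrix over $\F[E]$ for values of $N_e$ large enough that $\ell(K)$ and $r(K)$ genuinely saturate, and must be confident that the saturation observed in the output is not an artifact of a truncation of $S(K)$ that happens to miss a useful half-foam. The entire argument hinges on the empirical observation that, for $W_2$ and $W_3$, the saturated value of $\ell(K)$ hits $\Tait(K)$ on the nose; for the other webs in Figure \ref{fig:example-webs} this numerical coincidence fails and the same sandwich only yields strict two-sided bounds.
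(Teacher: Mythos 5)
Your non-quantum argument matches the paper's: the sandwich $\ell(K) \leq \dim J^\flat(K) \leq \Tait(K)$ together with the computed value $\ell(K) = \Tait(K)$ for $W_2$ and $W_3$. But the quantum half of your proof has a genuine gap at the link $\qrk \langle K \rangle_\phi = r_q(K)$ in your final chain. Nothing established up to that point gives this equality: $r_q(K)$ is the quantum rank of $N(K)/N(K)^\perp$, built from the \emph{finite} spanning set $S(K)$, whereas $\qrk \langle K \rangle_\phi$ is the quantum rank of the full module $M(K)/M(K)^\perp$. Even once one knows (as the paper proves in a later theorem) that the image $\bar{N}(K)$ of $N(K)$ in $\langle K \rangle_\phi$ is a free submodule of full rank with $\qrk \bar{N}(K) = r_q(K)$, a full-rank free submodule of a free graded $\F[E]$-module can have its homogeneous generators shifted upward in degree by multiples of $\deg E = 6$ relative to those of the ambient module, so its quantum rank need not agree with that of the ambient module. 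Asserting $\qrk \langle K \rangle_\phi = r_q(K)$ is, given that you have already shown $r_q(K) = \ell_q(K)$, essentially asserting the conclusion $\qrk \langle K \rangle_\phi = \ell_q(K)$ that you are trying to prove. (A smaller slip of the same kind: $r(K) = \rk \langle K \rangle_\phi$ is not ``for free'' from Theorem \ref{theorem:free-tait}; it holds for $W_2$ and $W_3$ only because $\ell(K) \leq r(K) \leq \Tait(K)$ and the computation gives $\ell(K) = \Tait(K)$.)

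The gap is repairable, and the repair is exactly the paper's proof: the inequalities $\ell_q(K) \leq \qdim J^\flat(K) \leq \qrk \langle K \rangle_\phi$ from Theorem \ref{theorem:lq-ineq} and Corollary \ref{cor:tait-flat-restate} are coefficient-wise inequalities of Laurent polynomials with nonnegative coefficients, and evaluating at $q=1$ gives $\ell(K) \leq \dim J^\flat(K) \leq \rk \langle K \rangle_\phi = \Tait(K)$. Since the two ends agree, all coefficients must agree, and the whole chain collapses to equalities --- no Smith-form analysis of the $r_i$ is needed at all. Your observation that $\ell(K) = r(K)$ forces every $r_i = 0$ and hence $\ell_q(K) = r_q(K)$ is correct and is a nice structural remark, but it is doing no work toward pinning down $\qrk \langle K \rangle_\phi$; the ``evaluate at $q=1$'' step is the missing ingredient.
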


\begin{proof}
From Corollary \ref{cor:tait-flat-restate} and Theorem
\ref{theorem:lq-ineq}, we have that
\begin{align}
  \label{eqn:ineq-exact-result}
  \ell_q(K) \leq \qdim J^\flat(K) \leq \qrk {\langle K \rangle_\phi}.
\end{align}
Recall from Theorem \ref{theorem:free-tait} that
$\rk {\langle K \rangle_\phi} = \Tait(K)$.
Table \ref{table:results-qdim} shows that for the webs $W_2$ and $W_3$
we have that $\ell(K) = \Tait(K)$, so
$\ell(K) = \rk {\langle K \rangle_\phi}$ and thus the inequalities in
equation (\ref{eqn:ineq-exact-result}) must in fact be equalities:
\begin{align}
  \ell_q(K) = \qdim J^\flat(K) = \qrk {\langle K \rangle_\phi}.
\end{align}
\end{proof}

For the remaining webs $W_1$, $W_4$, $W_5$, $W_6$, and $W_7$,
in which $\ell(K)$ is strictly less than $\Tait(K)$, the computer
bounds strongly constrain the possibilities for $\qdim J^\flat(K)$ and
$\qdim {\langle K \rangle_\phi}$, as can be understood as follows.
Define $\bar{N}(K) = N(K)/M(K)^\perp$ to be the image of $N(K)$ in the
quotient $\langle K \rangle_\phi = M(K)/M(K)^\perp$.
In all of the example webs $K$ that we have considered, the set $S(K)$
is sufficiently large that $r(K) = \Tait(K)$ and we can thus apply
the following result:

\begin{theorem}
If $r(K) = \Tait(K)$ then $\bar{N}(K)$ is a free submodule
of $\langle K \rangle_\phi$ of full rank, with quantum rank
$\qrk \bar{N}(K) = r_q(K)$.
\end{theorem}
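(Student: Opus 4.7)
The plan is to compare $\bar{N}(K)$, which lives inside the free $\F[E]$-module $\langle K \rangle_\phi$ of rank $\Tait(K)$ by Theorem \ref{theorem:free-tait}, with the auxiliary free module $N(K)/N(K)^\perp$ of rank $r(K)$, and to leverage the hypothesis $r(K) = \Tait(K)$ to conclude that a natural graded surjection between these modules is in fact an isomorphism of graded $\F[E]$-modules.

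First I would observe that any element of $N(K)$ which lies in $M(K)^\perp$ is orthogonal to all of $M(K)$, hence in particular to $N(K) \subseteq M(K)$; so $M(K)^\perp \cap N(K) \subseteq N(K)^\perp$. This inclusion induces a surjective graded $\F[E]$-module homomorphism $\pi : \bar{N}(K) \twoheadrightarrow N(K)/N(K)^\perp$. Next I would pin down the rank of $\bar{N}(K)$: since $\F[E]$ is a PID and $\bar{N}(K)$ is a submodule of the free module $\langle K \rangle_\phi$ of rank $\Tait(K)$, it is itself free of rank at most $\Tait(K)$; and since $\pi$ surjects onto a free module of rank $r(K) = \Tait(K)$, the rank of the source must be at least $\Tait(K)$. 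Hence $\bar{N}(K)$ is free of rank exactly $\Tait(K)$, giving the first claim.

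For the quantum rank, I would then show $\pi$ is an isomorphism. Since it is a surjection between free $\F[E]$-modules of the same finite rank over an integral domain, tensoring with $\Frac(\F[E])$ turns it into a surjection of finite-dimensional vector spaces of equal dimension, hence an isomorphism; therefore $\ker \pi$ is torsion. But $\ker \pi$ is a submodule of $\bar{N}(K)$, which is free and in particular torsion-free, so $\ker \pi = 0$. Consequently $\pi$ is an isomorphism of graded $\F[E]$-modules, yielding $\qrk \bar{N}(K) = \qrk N(K)/N(K)^\perp = r_q(K)$.

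The main subtlety is keeping track of the $\Ints$-grading: one needs $(-,-)_\phi$ to be homogeneous in the sense that $(v,w)_\phi$ has degree $\deg v + \deg w$ for homogeneous half-foams $v$, $w$, so that $M(K)^\perp$ and $N(K)^\perp$ are graded submodules, the quotients $\bar{N}(K)$ and $N(K)/N(K)^\perp$ inherit gradings, and $\pi$ is a degree-preserving map. Granting this homogeneity — which follows from the grading properties of the Khovanov–Robert evaluation and the additivity of $\deg$ under composition recorded above equation (\ref{eqn:deg}) — the rank comparison automatically upgrades to the quantum-rank identity.
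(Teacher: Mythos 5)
Your proposal is correct and follows essentially the same route as the paper's proof: the same inclusion $M(K)^\perp \cap N(K) \subseteq N(K)^\perp$ giving a surjection onto $N(K)/N(K)^\perp$, the same PID/rank squeeze to get $\rk \bar{N}(K) = \Tait(K)$, and the same degree-preservation argument for the quantum rank. The only difference is that you spell out why the surjection is actually an isomorphism (torsion kernel inside a torsion-free module), a detail the paper compresses into ``$\psi$ clearly preserves degrees''; this is a welcome elaboration rather than a different approach.
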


\begin{proof}
Recall from Theorem \ref{theorem:free-tait} that
$\langle K \rangle_\phi$ is a free $\F[E]$-module with
$\rk {\langle K \rangle_\phi} = \Tait(K)$.
Since $\bar{N}(K)$ is a submodule of
$\langle K \rangle_\phi$ and $\F[E]$ is a PID, it follows that
$\bar{N}(K)$ is free with $\rk \bar{N}(K) \leq \Tait(K)$.
Since $N(K) \cap M(K)^\perp \subseteq N(K)^\perp$, we have a
surjective homomorphism $\psi:\bar{N}(K) \rightarrow N(K)/N(K)^\perp$.
It follows that
$\rk \bar{N}(K) \geq \rk  N(K)/N(K)^\perp = r(K) = \Tait(K)$, and thus
$\rk \bar{N}(K) = \Tait(K)$.
Since $\psi$ clearly preserves degrees, it follows that
$\qrk \bar{N}(K) = \qrk N(K)/N(K)^\perp = r_q(K)$.
\end{proof}

Since $\bar{N}(K)$ is a free submodule of $\langle K \rangle_\phi$ of
full rank, the only possible difference between
$\bar{N}(K)$ and $\langle K \rangle_{\phi}$ is that homogeneous
generators of $\bar{N}(K)$ could be shifted upwards in grading by
multiples of $\deg E = 6$ relative to corresponding generators of
$\langle K \rangle_\phi$.
As an example, consider the dodecahedral web $W_1$, for which
\begin{align}
  \ell_q(W_1) &= 9 q^{-3} + 20 q^{-1} + 20 q + 9 q^3, &
  r_q(W_1) &= 9 q^{-3} + 20 q^{-1} + 20 q + 11 q^3.
\end{align}
There are only two possible cases.
One case is that $\bar{N}(W_1) = \langle W_1 \rangle_\phi$.
In this case $\dim J^\flat(W_1) = 58$ and
\begin{align}
  \qdim J^\flat(W_1) &=
  9 q^{-3} + 20 q^{-1} + 20 q + 9 q^3, &
  \qrk {\langle W_1 \rangle_\phi} &=
  9 q^{-3} + 20 q^{-1} + 20 q + 11 q^3.
\end{align}
The second case is that $\bar{N}(W_1)$ is a proper submodule of
$\langle W_1 \rangle_\phi$ of full rank.
Since
$\ell_q(W_1) \leq \qdim J^\flat(W_1) \leq
\qrk {\langle W_1 \rangle_\phi}$, one of the degree 3
generators of $\bar{N}(W_1)$ must be shifted upwards in degree
relative to a corresponding generator of $\langle W_1 \rangle_\phi$ in
degree -3.
In this case $\dim J^\flat(W_1) = 60$ and
\begin{align}
  \qdim J^\flat(W_1) &=
  \qrk {\langle W_1 \rangle_\phi} =
  10 q^{-3} + 20 q^{-1} + 20 q + 10 q^3.
\end{align}

\begin{table}
  \scalebox{0.95}{
 \begin{tabular}{|lrrlr|}
    \hline
    $K$ & $\ell(K)$ & $\Tait(K)$ &
    $\ell_q(K) + (r_q(K) - \ell_q(K))$ & \\
    \hline
    $W_1$
    %(webdodec, previously $W_1$)
    & 58 & 60 &
    $9 q^{-3} + 20 q^{-1} + 20 q + 9 q^3 + (2q^3)$ & \\
    $W_2$
    %(webhex, previously $W_7$)
    & 120 & 120 &
    $3 q^{-5} + 2 q^{-4} + 16 q^{-3} + 6 q^{-2} + 29 q^{-1} + 8 + 29 q
    + 6 q^2 + 16 q^3 + 2 q^4 + 3 q^5$ & \\
    $W_3$
    %(webF28, previously $W_8$)
    & 162 & 162 &
    $2 q^{-5} + 7 q^{-4} + 13 q^{-3} + 21 q^{-2} + 24 q^{-1} + 28 +
    24 q + 21 q^2 + 13 q^3 + 7 q^4 + 2 q^5$ & \\
    $W_4$
    %(web28full, previously $W_{10}$)
    & 178 & 180 &
    $q^{-6} + 11 q^{-4} + 10 q^{-3} + 29 q^{-2} + 19 q^{-1} + 38 +
    19 q + 29 q^2 + 10 q^3 + 11 q^4 + q^6 + (q + q^5)$ & \\
    $W_5$
    %(web26full, previously $W_{11}$)
    & 188 & 192 &
    $4 q^{-5} + 31 q^{-3} + 59 q^{-1} + 59 q + 31 q^3 + 4 q^5 +
    (q + 2q^3 + q^5)$ & \\
    $W_6$
    %(websept, previously $W_{13}$)
    & 248 & 252 &
    $20 q^{-4} + 62 q^{-2} + 84 + 62 q^2 + 20 q^4 + (2q^2 + 2q^4)$ & \\
    $W_7$
    %(webtrihex, previously $W_{14}$)
    & 308 & 312 &
    $4 q^{-5} + 5 q^{-4} + 41 q^{-3} + 15 q^{-2} + 79 q^{-1} + 20 +
    79 q + 15 q^2 + 41 q^3 + 5 q^4 + 4 q^5 + (q + 2q^3 + q^5)$ & \\
    \hline
 \end{tabular}
}
 \medskip
 \medskip
  \caption{
    \label{table:results-qdim}
    For each web $K$, we list $\ell(K)$, $\Tait(K) = r(K)$,
    $\ell_q(K)$, and (indicated in parentheses)
    the difference $r_q(K) - \ell_q(K)$ when this quantity is
    nonzero.
  }
 \end{table}

\section{Questions}
\label{sec:questions}

We conclude with three open questions.
We note that $\qrk {\langle K \rangle_\phi}$ is symmetric under
$q \rightarrow 1/q$ for all reducible webs $K$, and also
for webs $W_2$ and $W_3$ in Table \ref{table:results-qdim} for
which $\ell(K) = \Tait(K)$.
We can ask if this property holds for all webs:

\begin{question}
\label{question:q1}
Is it the case that $\qrk {\langle K \rangle_\phi}$ is symmetric under
$q \rightarrow 1/q$ for all webs $K$?
\end{question}

If Question \ref{question:q1} were to be answered in the affirmative,
it would imply that $\dim J^\flat(K) = \Tait(K)$ and
$\qdim J^\flat(K) = \qrk {\langle K \rangle_\phi}$ for all webs $K$,
thus answering Question \ref{question:tait-flat} in the affirmative,
due to the following result:

\begin{theorem}
If $\qrk {\langle K \rangle_\phi}$ is symmetric under
$q \rightarrow 1/q$ then
$\dim J^\flat(K) = \Tait(K)$ and
$\qdim J^\flat(K) = \qrk {\langle K \rangle_\phi}$.
\end{theorem}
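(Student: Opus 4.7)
The plan is to prove the stronger algebraic statement that the $\F[E]$-valued Gram matrix of $(-,-)_\phi$ on $\langle K \rangle_\phi$ is unimodular, from which both claimed equalities will follow by reducing modulo $E$. The key observation is that the grading pins down the degree of this determinant, and the symmetry hypothesis forces that degree to be $0$; since $\deg E = 6 > 0$, the only option is for the determinant to equal $1$.

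First I would fix a homogeneous $\F[E]$-basis $\{g_1,\dots,g_r\}$ of $\langle K \rangle_\phi$, with $r = \Tait(K)$ by Theorem~\ref{theorem:free-tait}, and set $d_i = \deg g_i$. Because foam degree is additive under composition and is preserved by $\phi$, the pairing $(-,-)_\phi$ has degree $0$ as a map of graded $\F[E]$-modules, so the Gram matrix $G_{ij} = (g_i,g_j)_\phi$ satisfies $\deg G_{ij} = d_i + d_j$. Every term in the Leibniz expansion of $\det G$ then has the common degree $2\sum_i d_i$, making $\det G$ a homogeneous element of $\F[E]$ of that degree; it is nonzero because the pairing on $\langle K \rangle_\phi$ is nondegenerate by construction of $\langle K \rangle_\phi$ as the quotient of $M(K)$ by $M(K)^\perp$.

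Next I would invoke the hypothesis. The symmetry $\qrk \langle K \rangle_\phi(q) = \qrk \langle K \rangle_\phi(1/q)$ says that the multiset $\{d_i\}$ is invariant under negation, so its elements pair off into opposite pairs and $\sum_i d_i = 0$. Combined with the previous step, $\det G$ is a nonzero homogeneous element of $\F[E]$ of degree $0$, which (since $\deg E = 6$) forces $\det G = 1$; in other words, the pairing $(-,-)_\phi$ on $\langle K \rangle_\phi$ is unimodular.

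Finally I would reduce modulo $E$ to recover $J^\flat(K)$. Exactly as in the proof of Corollary~\ref{cor:tait-flat-restate}, the natural composite $M(K) \to V(K) \to J^\flat(K)$ factors through a graded surjection $\alpha\colon \langle K \rangle_\phi / E\langle K \rangle_\phi \twoheadrightarrow J^\flat(K)$, and an element $\bar v$ of the source lies in $\ker \alpha$ exactly when its coordinate vector in the basis $\{g_i\}$ is annihilated by the mod-$E$ Gram matrix $\bar G = G|_{E=0}$. Unimodularity gives $\det \bar G = 1$, so $\bar G$ is invertible, $\alpha$ is injective, and therefore a graded isomorphism. Reading off dimensions and quantum dimensions yields $\dim J^\flat(K) = r = \Tait(K)$ and $\qdim J^\flat(K) = \sum_i q^{d_i} = \qrk \langle K \rangle_\phi$. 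I expect the substantive step to be the second paragraph, converting the multiset symmetry of $\qrk$ into unimodularity via the degree of $\det G$; the identification of $\ker \alpha$ with $\ker \bar G$ and the final dimension count are routine.
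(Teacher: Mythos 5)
Your proof is correct, and it takes a genuinely different route from the paper's. Both arguments come down to showing that the graded pairing $(-,-)_\phi$ on the free module $\langle K \rangle_\phi$ is unimodular, and both then conclude by passing to $E=0$ (a step the paper leaves essentially implicit but you spell out via the surjection $\langle K \rangle_\phi / E\langle K \rangle_\phi \twoheadrightarrow J^\flat(K)$). Where you differ is in how the symmetry of $\qrk {\langle K \rangle_\phi}$ is exploited. The paper works with the Smith decomposition of the Gram matrix, producing dual homogeneous generating sets $\{g_i\}$ and $\{\tilde g_i\}$ with $(g_i,\tilde g_j)_\phi = \delta_{ij}E^{r_i}$ and $\deg g_i + \deg \tilde g_i = 6r_i$, and then runs an extremal counting argument: it picks the largest $n$ for which a generator of degree $-n$ pairs into degree $n+6m$ with $m>0$ and derives a contradiction from the equality of the numbers of generators in degrees $\pm(n+6m)$. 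You instead make a single global observation: each Gram entry is homogeneous of degree $d_i+d_j$ (when nonzero), so every Leibniz term of $\det G$ has degree $2\sum_i d_i$, which the symmetry forces to be $0$; nondegeneracy of the induced form on the quotient makes $\det G$ a nonzero degree-zero element of $\F[E]$, hence equal to $1$. Your version is cleaner and avoids both the Smith form and the somewhat delicate bookkeeping at the extremal degree, while the paper's version stays closer to the computational machinery it has already set up for calculating $r_q(K)$ and gives slightly more refined information about which generators pair with which. The only cosmetic caveat is the one just noted: $G_{ij}$ is homogeneous of degree $d_i+d_j$ only when it is nonzero (it must vanish whenever $d_i+d_j$ is not a nonnegative multiple of $6$), but this does not affect the homogeneity of $\det G$ or the rest of your argument.
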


\begin{proof}
Given a finite generating set of half-foams for
$\langle K \rangle_\phi$, we can proceed as in the above
discussion of the computation of $r_q(K)$ to obtain two sets of
free homogeneous generators $\{g_1, \cdots, g_r\}$ and
$\{\tilde{g}_1, \cdots, \tilde{g}_r\}$ for $\langle K \rangle_\phi$,
where $r = \Tait(K)$, such
that $(g_i,\tilde{g}_j)_\phi$ is nonzero if $i=j$ and zero otherwise.
We will say that $g_i$ and $\tilde{g}_i$ \emph{pair} together.
The claim is equivalent to the statement that only generators of
opposite degrees pair together.
Assume for contradiction that this is not the case, and
pick the largest value of $n$ such that a generator in degree $-n$
pairs with a generator in degree $n + 6m$ for $m > 0$.
By the symmetry hypothesis, the number of generators in degree
$-(n + 6m)$ is the same as the number of generators in degree
$n + 6m$, and by our choice of $n$ the generators in these opposite
degrees must mutually pair together; contradiction.
\end{proof}

For all webs $K$, except the empty web and circle web, the integer
$\Tait(K)$ is divisible by $3! = 6$, since we can permute the colors
of any Tait coloring of $K$ to obtain another Tait coloring.
Since $\rk {\langle K \rangle_\phi} = \Tait(K)$, we can ask whether
the quantum analog of this divisibility property holds:

\begin{question}
\label{question:q2}
Is $\qrk {\langle K \rangle_\phi}$ divisible by
$[3]! = (q^2 + 1 + q^{-2})(q + q^{-1})$ for all webs $K$?
\end{question}

\begin{remark}
In general, the quantum analog of a positive integer $n$ is
$[n] = q^{n-1} + q^{n-3} + \cdots + q^{-(n-1)}$ and the quantum analog
of $n!$ is $[n]! = [n][n-1]\cdots[1]$.
\end{remark}

We note that $\qrk {\langle K \rangle_\phi}$ is divisible by
$[3]!$ for all reducible webs $K$ (except the empty web
and the circle web), and for the webs $W_2$ and $W_3$
in Table \ref{table:results-qdim} for which $\ell(K) = \Tait(K)$.
For the remaining webs in Table \ref{table:results-qdim}, for which
$\ell(K) < \Tait(K)$, the computation of $r_q(K)$ shows that
if $\dim J^\flat(K) = \Tait(K)$ then
$\qdim J^\flat(K) = \qrk {\langle K \rangle_\phi}$ is divisible by
$[3]!$.
If Question \ref{question:q2} were to be answered in the affirmative,
it would have a number of implications; for example, for the
dodecahedral web $W_1$ it would force $\dim J^\flat(W_1) = 60$.

From Table \ref{table:results-qdim}, we note that the quantum
dimension $\ell_q(K)$ contains only odd powers of $q$ for webs
$W_1$ and $W_5$; only even powers of $q$ for web $W_6$; and
both even and odd powers of $q$ for webs
$W_2$, $W_3$, $W_4$, and $W_7$.
We can ask:

\begin{question}
Under what conditions does $\ell_q(K)$ contain only even, or only odd,
powers of $q$?
\end{question}

\section*{Acknowledgments}

The author would like to express his gratitude towards Ciprian
Manolescu for providing invaluable guidance, and towards Andrea
Bertozzi for the use of the UCLA Joshua computing cluster.
The author would also like to thank Mikhail Khovanov, Peter
Kronheimer, Tomasz Mrowka, and Louis-Hadrien Robert for providing many
helpful comments on an earlier version of this paper that led to
significant changes in the current version.
The author was partially supported by NSF grant number
DMS-1708320.

\end{document}